\documentclass[11pt,a4paper]{scrartcl}
\usepackage[numbers,square]{natbib}
\usepackage{microtype}
\usepackage[font=small,labelfont=bf]{caption}
\usepackage[
  a4paper,
  left=3cm,
  right=3cm,
  top=3cm,
  bottom=3cm
]{geometry}
\usepackage{graphicx}
\graphicspath{{./}}
\usepackage[utf8]{inputenc}
\usepackage[T1]{fontenc}
\usepackage[english]{babel}
\usepackage{amsmath,amssymb,amsthm}
\usepackage{mathrsfs}
\usepackage{mathtools}
\usepackage{hyperref}

\hypersetup{
  colorlinks,
  linkcolor={blue},
  citecolor={blue},
  urlcolor={blue}
}

\usepackage{macros}

\numberwithin{equation}{section}
\allowdisplaybreaks

\theoremstyle{plain}
\newtheorem{theorem}{Theorem}[section]
\newtheorem{lemma}{Lemma}[section]
\newtheorem{proposition}{Proposition}[section]

\theoremstyle{definition}
\newtheorem{definition}{Definition}[section]

\theoremstyle{remark}
\newtheorem{remark}{Remark}[section]

\newenvironment{proofof}[1]{\begin{proof}[Proof of #1]}{\end{proof}}
\newcommand{\acknowledgements}{\paragraph{Acknowledgments.}}

\begin{document}

\title{Mod-$\phi$ convergence for random variables with cyclotomic generating functions}
\author{
  Christoph Th\"ale\thanks{Faculty of Mathematics, Ruhr University Bochum, Universit\"atsstra\ss e 150, 44801 Bochum, Germany. E-mail: \href{mailto:christoph.thaele@rub.de}{christoph.thaele@rub.de}.}
  \qquad
  Philipp Tuchel\thanks{Faculty of Mathematics, Ruhr University Bochum, Universit\"atsstra\ss e 150, 44801 Bochum, Germany. E-mail: \href{mailto:philipp.tuchel@rub.de}{philipp.tuchel@rub.de}.}
}
\date{}
\maketitle

\begin{abstract}
  \small We consider a sequence of discrete random variables whose generating functions are polynomials only having roots on the unit circle in the complex plane. For such sequences we prove, under mild assumptions, mod-$\phi$ convergence with explicit rate and limiting function. This allows us to recover, in a unified way, classical central limit theorems as well as finer asymptotic expansions. The general theory is applied to random permutations and random partitions.

  \medspace
  \vskip 1mm
  \noindent{\textbf{Keywords}}. {Combinatorics, generating function, mod-$\phi$ convergence, probabilistic combinatorics, random partition, random permutation}\\
  {\textbf{MSC 2020}}. Primary: 05A15, 05A16, 60E10; Secondary: 60F05, 60F10.
\end{abstract}

\section{Introduction}

For each \(N \in \mathbb{N}\), let \(X_{N}\) be a random variable taking values in the finite set \(\{0,1,\dots,N\}\).  Its probability generating function is
$$
P_{N}(z) := \mathbb{E}[\,z^{X_{N}}\,] = \sum_{k=0}^{N} \mathbb{P}(X_{N}=k)\,z^{k}, 
\qquad z \in \mathbb{C}.
$$  
Assuming, without loss of generality, that \(\mathbb{P}(X_{N}=N)>0\), the generating function \(P_{N}\) is a polynomial of degree \(N\), whose complex roots are denoted by 
$$
\varrho_{1}^{(N)},\,\varrho_{2}^{(N)},\,\dots,\,\varrho_{N}^{(N)}.
$$
The study of such sequences of random variables via the analytic behavior of their generating polynomials and the geometric location of their zeros goes back at least to the work of Harper \cite{Harper} on the asymptotic normality of the Stirling numbers, and has since become a vibrant area of research.  The underlying philosophy is that the location of the zeros \(\varrho_{j}^{(N)}\) encodes information about the distributional behavior of \(X_{N}\) as \(N \to \infty\).  

Two particular structural scenarios for the zero-sets have proved especially fruitful:
\begin{itemize}
	\item[(i)] The case in which all the roots \(\varrho_{j}^{(N)}\) are real and nonpositive.  In this regime one may exploit the theory of real-rootedness and related factorization results to derive central limit theorems for \(X_{N}\).  
	\item[(ii)] The case in which all the roots lie on the unit-circle in the complex plane, i.e.\  
	\[
	\varrho_{j}^{(N)} \in \{z\in\mathbb{C}\;:\;|z|=1\}\text{ for }j=1,\dots,N.
	\]
	Here the so-called root-unitary behavior permits a different line of attack. Namely, one may exploit factorizations into cyclotomic polynomials, symmetries of the coefficients, or Fourier-analytic methods to capture fluctuations of \(X_{N}\).  
\end{itemize}
It is in fact rather remarkable that a large number of sequences of discrete random variables arising in combinatorial enumeration or statistical mechanics fall into one of these two broadly defined classes.  In the first case (nonpositive real roots) one typically sees classical limit behaviors such as Gaussian fluctuations, which are controlled by the variance growth only.  In the second case (unit-circle roots) one  sees more subtle phenomena, such as the necessity of controlling higher moments in order to obtain normal (or non-normal) limits, see \cite{heerten2024probabilistic,hwang2015limit,RednossThaele}. We refer to the surveys \cite{BilleySwanson,BrandenSurvey} for further details and background material on these two classes of random variables and generating polynomials.

In the present work we focus on the second, cyclotomic or root-unitary, case, where all zeros of the probability generating polynomial lie on the unit circle. This situation is particularly rich, since many classical generating functions from combinatorics and algebra factor into products of cyclotomic terms. Typical examples include inversion statistics of random permutations, length distributions in reflection groups, and generating functions of plane partitions, see \cite{BilleySwanson,heerten2024probabilistic,hwang2015limit,RednossThaele} for additional references. To analyse such models, we employ the framework of mod-$\phi$ convergence, which has not been used in this area before. It provides a powerful analytic method to describe refined limit theorems beyond the classical central limit regime in a unified way. In contrast to ordinary convergence in distribution, mod-$\phi$ convergence captures precise exponential and oscillatory corrections to the Gaussian approximation, and thereby allows to identify higher-order fluctuations, local limits, and moderate deviation estimates. We refer to \cite{feray2018mod} for a detailed overview of the limit theorems that follow once mod-$\phi$ convergence is established and a general introduction to the topic. Formally, a sequence of random variables $(X_N)_{N\in\N}$ is said to converge mod-$\phi$ with parameter sequence $(w_N)_{N\in\N}$, normalization function $\phi$ and limiting function $\psi$ if
$$
\lim_{N\to\infty}{\E[e^{z X_N}] \over e^{w_N \phi(z)}}  = \psi(z),
$$
where $\phi$ and $\psi$ are analytic functions and the convergence is supposed to take place locally uniformly on a suitable complex domain. Roughly speaking, the term $e^{w_N \phi(z)}$ encodes the leading exponential growth, while the analytic function $\psi(z)$ captures the limiting residual fluctuations. It is instructive to think of $X_N$ as $Y_N+Z+O(1/N)$, where $Y_N$ is a 'random variable' with moment generating function $e^{w_N\phi}$ and $Z$ is an independent 'random variable' with moment generating function $\psi$ -- even though there might be no true random variables having $e^{w_N\phi}$ or $\psi$ as their moment generating functions, see the discussion in \cite{kabluchko2024mod}.

A large class of cyclotomic generating functions can be written in the explicit product form
$$
P_N(z)= \Big(\prod_{j=1}^N\frac{a_j}{b_j}\Big)\Big(\prod_{j=1}^N\frac{1-z^{b_j}}{1-z^{a_j}}\Big),
$$
for sequences $(a_j)_{j=1,\ldots,N}$ and $(b_j)_{j=1,\ldots,N}$ of natural numbers with $b_j\ge a_j$, see \cite{hwang2015limit}. In this setting, we study the rescaled moment generating functions
$$
\E\big[e^{z X_N/s_N}\big]
= \Big(\prod_{j=1}^{N}\frac{a_j}{b_j}\Big)
\Big(\prod_{j=1}^{N}\frac{1 - e^{z b_j/s_N}}{1 - e^{z a_j/s_N}}\Big),
$$
and establish a general mod-$\phi$ convergence result under mild regularity assumptions.
In particular, our main theorem, Theorem~\ref{thm:modphi-mixed} below, provides an explicit description of the analytic function $\psi$ and the normalization function $\phi$. It unifies several classical models within a single framework. For instance, in the classical case of the number of inversions in a uniform random permutation, we obtain mod-$\phi$ convergence with parameters
$$
w_N=N,\qquad \psi(z) = \sqrt{\frac{e^z - 1}{z}}\, e^{-z/2},
\qquad
\phi(z) = \int_0^1 \log\Big(\frac{e^{tz}-1}{tz}\Big) dt.
$$
This framework implies, for example, the asymptotic equivalence
$$
\P(X_N = \E[X_N] + k) \sim {\psi(z^*(x))\over\sqrt{2\pi N\phi''(z^*(x))}}\,e^{-NI(x)},\qquad x={k\over N},\qquad N\to\infty,
$$
where for $x>0$, $z^*(x)$ is the unique solution of $\phi'(z^*(x))=x$ and $I(x)=xz^*(x)-\phi(z^*(x))$, see \cite[Theorem 3.4]{feray2018mod}.

The remaining parts of this paper are structured as follows. In Section \ref{sec:MainResult} we present the main result of this paper, Theorem \ref{thm:modphi-mixed}. We also rephrase there a formal definition of the concept of mod-$\phi$ convergence we work with. Applications of Theorem \ref{thm:modphi-mixed} to random permutations and random partitions are discussed in Section \ref{sec: applications}. The proof of Theorem \ref{thm:modphi-mixed} is the content of Section \ref{sec: proofs}. The final Section \ref{sec:proofs_app} contains the proofs of the applications.

\section{Main result}\label{sec:MainResult}

Let $(X_N)_{N\in\N}$ be a sequence of nonnegative integer-valued random variables with probability generating functions $P_N(z)$ of the form
$$
P_N(z) = \Big(\prod_{j=1}^N\frac{a_{N,j}}{b_{N,j}}\Big) \Big(\prod_{j=1}^N \frac{1-z^{b_{N,j}}}{1-z^{a_{N,j}}}\Big),
$$
where $a_{N,j}$ and $b_{N,j}$ are natural numbers with $b_{N,j} \ge a_{N,j}$ for all $j=1, \dots, N$ and  $N\in\N$. It is not automatic that a function of this form is a probability generating function. Indeed, although $P_N(1)=1$, its power-series coefficients need not all be nonnegative. A sufficient condition is that $a_{N,j}|b_{N,j}$ for every $j=1,\ldots,N$, in which case each factor is itsself a probability generating function. Probability generating functions of the above form have all roots located on the unit circle of the complex plane, see \cite{BilleySwanson,heerten2024probabilistic,hwang2015limit}. We are interested in the moment generating function $\mathbb{E}[e^{z X_N}]$ of $X_N$. Looking at the rescaled random variables ${X_N}/{s_N}$, for some scaling sequence $s_N\to\infty$, we get the object of interest in this work, namely
\begin{align}\label{eq:DefKN}
\E\big[e^{z X_N/s_N} \big] = \Big(\prod_{j=1}^N \frac{a_{N,j}}{b_{N,j}} \Big)\Big(\prod_{j=1}^N \frac{1-e^{\frac{z}{s_N} b_{N,j}}}{1-e^{\frac{z}{s_N} a_{N,j}}}\Big),
\end{align}
assuming that $z \frac{a_{N,j}}{s_N} \notin 2\pi i\,\mathbb{Z}$ for $j=1,\ldots,N$. The product representation in \eqref{eq:DefKN} is understood via analytic continuation at its (removable) singularities $z\,a_{N,j}/s_N\in 2\pi i\,\Z$.

Next, we recall the concept of mod-$\phi$ convergence for sequences of random variables. In this paper, we follow the approach of  \cite{kabluchko2024mod}, which is close but not equivalent to the one in \cite{feray2018mod}. Still, the central theorems of \cite{feray2018mod} continue to hold in this set-up. 

\begin{definition}\label{def: mod_phi_convergence}
	Let $(X_N)_{N\in\mathbb{N}}$ be a sequence of real-valued random variables with moment generating functions $\mathbb{E}[e^{zX_N}]$ existing in some strip
	$\mathscr{H} = \{z \in \mathbb{C} : \Re z \in (\beta_-, \beta_+)\}$ with $-\infty \leq \beta_- < \beta_+ \leq +\infty$. Suppose we are given
	\begin{itemize}
	\item[(i)] a sequence $(w_N)_{N\in\mathbb{N}}$ of positive numbers with $\lim\limits_{N\to\infty} w_N = +\infty$;
	\item[(ii)] an open, connected set $\mathscr{D} \subset \mathscr{H}$ containing the interval $(\beta_-, \beta_+)$;
	\item[(iii)] an analytic function $\phi : \mathscr{D} \to \mathbb{C}$ whose restriction to the interval $(\beta_-, \beta_+)$ is real-valued and strictly convex;
	\item[(iv)] an analytic function $\psi(z) : \mathscr{D} \to \mathbb{C}$ which does not vanish on $\mathscr{D} \cap \mathbb{R}$.
	\end{itemize}
	If
	\begin{align*}
		\lim_{N\to\infty} \frac{\mathbb{E}e^{zX_N}}{e^{w_N \phi(z)}} = \psi(z),
	\end{align*}
	locally uniform on $\mathscr{D}$, then we say that $(X_N)_{N\in\mathbb{N}}$ converges mod-$\phi$ with parameters $w_N$, normalization function $\phi$ and limiting function $\psi$.
\end{definition}

This definition comes with several notable consequences. We state two of them here. From the definition, it follows that 
$\phi(0)=0$ and $\psi(0)=1$. If $\phi''(0)>0$, fix $u\in\R$ and put
$t_N:=u/\sqrt{w_N\phi''(0)}$. A Taylor expansion at the origin gives
\begin{align*}
 &\E\left[\exp\left(it_N(X_N-w_N\phi'(0))\right)\right] \\
 &\qquad=\exp\left(w_N[\phi(it_N)-it_N\phi'(0)]\right)\frac{\E\left[\exp\left(it_N X_N\right)\right]}{\exp\left(w_N\phi(it_N)\right)}
 \longrightarrow e^{-u^2/2}.
\end{align*}
Thus, by L\'evy's continuity theorem,
\[
 \frac{X_N}{w_N}\xrightarrow[N\to\infty]{\P}\phi'(0)
 \qquad\text{and}\qquad
 \frac{X_N-w_N\phi'(0)}{\sqrt{w_N\phi''(0)}}
 \xrightarrow[N\to\infty]{\mathrm d}\mathcal{N}(0,1).
\]
Moreover, for any positive sequence $(r_N)_{N\in\N}$ with
$\sqrt{w_N}\ll r_N\ll w_N$, $(X_N-w_N\phi'(0))/r_N$ satisfies a moderate deviation principle with
speed $r_N^2/w_N$ and rate function $x\mapsto x^2/(2\phi''(0))$. Indeed, this
follows from the G\"artner--Ellis theorem since, for every $\lambda\in\R$,
\[
 \frac{w_N}{r_N^2}\log\E\left[
 \exp\left(\lambda\frac{r_N}{w_N}
 (X_N-w_N\phi'(0))\right)\right]
 \longrightarrow \frac{\lambda^2}{2}\phi''(0),
\]
The characteristic-function argument is also recorded in
\cite[Remark~3.10]{feray2018mod}. For the G\"artner--Ellis theorem, see
\cite[Theorem~2.3.6]{DemboZeitouni}. See also
\cite[Chapter~9.2]{KabluchkoSteigenbergerThaele}.

The main theorem of this work describes the mod-$\phi$ convergence of the rescaled random variables $X_N/s_N$ in this setting under fairly natural assumptions on the arrays $a_{N,j}$ and $b_{N,j}$. For the proof of the theorem, see Section \ref{sec: proofs}. We shall write $C^2[0,1]$ for the space of twice continuously differentiable functions on $[0,1]$. Moreover, in our setting we will automatically have $\beta_{-} = -\infty$ and $\beta_{+} = +\infty$, so that these parameters need not be mentioned in the subsequent discussion. Throughout, $\log$ denotes the principal branch of the logarithm on $\C\setminus(-\infty,0]$. Moreover, we use the convention $\frac{e^{zu}-1}{zu}=1$ at $u=0$, which holds by analytic continuation.

\begin{theorem}[Mod-$\phi$ convergence for cyclotomic generating functions]\label{thm:modphi-mixed}
	Let $(X_N)_{N\in\N}$ be a sequence of random variables and consider the rescaled moment generating functions $\E\!\big[e^{zX_N/s_N}\big]$, which are of the form \eqref{eq:DefKN},
 where $s_N\to\infty$ and $b_{N,j} \ge a_{N,j}$ for all $j=1, \ldots, N$ and all $N\in\N$.  Suppose there exists a 
	non-decreasing function $f\in C^{2}[0,1]$ with $f(1)>0$ such that $b_{N,j}/s_N=f(j/N)$ for all $j=1,\ldots,N$.  Further, assume  $\delta_{N,j}:=a_{N,j}/s_N$ satisfies
	\[
	\max_{1\le j\le N} \delta_{N,j} \to 0, \qquad 
	\sum_{j=1}^{N}\delta_{N,j} \to \Sigma \in [0,\infty),
	\]
	as $N\to\infty$. Then the sequence of random variables ${X_N}/{s_N}$ converges mod-$\phi$ with parameter sequence $w_N=N$, domain $\widetilde{\mathscr{D}} = \{z\in \C: \Im z \in (-\pi/M, \pi/M)\}\cup \{z\in\C: \Re z < 0\}$ with $M := f(1)$, and normalization function and limiting function given by
	\begin{align*}
	\phi(z) &= \int_{0}^{1} \log\Bigl(\frac{e^{z f(t)}-1}{z f(t)}\Bigr)\,dt, \\
	\psi(z) &= \exp\left(
		\frac12\,\log\Bigl(\frac{e^{z f(1)}-1}{z f(1)}\Bigr)
		- \frac12\,\log\Bigl(\frac{e^{z f(0)}-1}{z f(0)}\Bigr)
		- \frac{z}{2}\,\Sigma
	\right).
	\end{align*}
If, in addition, we have $f(0) = 0$, then the same result holds on
$$
\mathscr{D} := \bigl\{z\in \C: \frac{e^{z f(t)}-1}{z f(t)}\not\in (-\infty, 0] \text{ for all } t\in [0,1]\bigr\}.
$$
\end{theorem}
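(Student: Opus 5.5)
We take logarithms in \eqref{eq:DefKN} and rewrite each factor as a ratio of the entire functions $g(u):=\frac{e^{u}-1}{u}$ (with $g(0)=1$). Since $\frac{a_{N,j}}{b_{N,j}}\cdot\frac{1-e^{zb_{N,j}/s_N}}{1-e^{za_{N,j}/s_N}}=\frac{g(zb_{N,j}/s_N)}{g(za_{N,j}/s_N)}$, we have
\[
\log \E\big[e^{zX_N/s_N}\big]=\sum_{j=1}^N \log g\bigl(z f(j/N)\bigr)-\sum_{j=1}^N \log g(z\delta_{N,j}).
\]
Both sums must be understood with a consistent branch. On $\widetilde{\mathscr D}$ and on $\mathscr D$ we will check that $g(zu)\notin(-\infty,0]$ for all $u\in[0,M]$ (respectively for $u=f(t)$). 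Then $h(z,u):=\log g(zu)$ is analytic in $z$ and $C^\infty$ in $u$ there, and the identity holds with principal logarithms on a neighbourhood of $0$. It extends by analytic continuation because both sides are analytic on the connected domain.

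For $\widetilde{\mathscr D}$ the check goes as follows. If $|\Im z|<\pi/M$, then $|\Im(zu)|<\pi$, and $g(w)=\int_0^1 e^{sw}\,ds$ is an average of points whose arguments lie in $(-\pi,\pi)$ within an open half-plane-type sector. More carefully, $e^{sw}$ has argument $s\Im w\in(-\pi,\pi)$ of fixed sign, so the average avoids $(-\infty,0]$. If $\Re z<0$, write $g(w)=(1-e^{w})/(-w)$ with $\Re w\le 0$. Then $1-e^{w}$ lies in the disk of radius $1$ about $1$, so $\arg(1-e^w)\in(-\pi/2,\pi/2)$, and $\arg(-w)\in(-\pi/2,\pi/2)$. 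The quotient therefore has argument in $(-\pi,\pi)$. I expect the branch bookkeeping here, especially uniformity on compacts, to be the main obstacle.

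For the first sum we use the trapezoidal rule. The function $t\mapsto h(z,f(t))$ is $C^2$ on $[0,1]$, with second derivative bounded uniformly for $z$ in a compact subset $K$ of the domain, since $f\in C^2$ and $h$ is smooth and analytic. Hence
\[
\sum_{j=1}^N h(z,f(j/N))=N\int_0^1 h(z,f(t))\,dt+\tfrac12\bigl(h(z,f(1))-h(z,f(0))\bigr)+O(1/N)
\]
uniformly on $K$. This produces exactly $N\phi(z)$ and the first two terms in $\log\psi$.

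For the second sum, $h(z,u)=\frac{zu}{2}+O(|z|^2u^2)$ uniformly for $z\in K$ and $u$ small, since $\log g(w)=w/2+w^2/24+\dots$. Because $\max_j\delta_{N,j}\to0$, we get $\sum_j h(z,\delta_{N,j})=\frac z2\sum_j\delta_{N,j}+O\bigl(\max_j\delta_{N,j}\sum_j\delta_{N,j}\bigr)\to \frac z2\Sigma$ uniformly on $K$. Combining the two sums gives $\E[e^{zX_N/s_N}]e^{-N\phi(z)}\to\psi(z)$ locally uniformly.

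It remains to verify the structural conditions of Definition~\ref{def: mod_phi_convergence}. The function $\phi$ is real on $\R$ and strictly convex, because $\log g$ is the cumulant generating function of a uniform variable and hence strictly convex, while $f(t)>0$ on a set of positive measure. The function $\psi$ has no zeros on $\R$ since $g>0$ there. Finally, in the case $f(0)=0$, the term $h(z,f(0))=0$ vanishes identically, and the only requirement on $z$ is that $g(zf(t))\notin(-\infty,0]$ for all $t\in[0,1]$. The $a$-part needs only small $u$, near which $g\approx1$. This is exactly the definition of $\mathscr D$, and the same estimates apply on compact subsets of $\mathscr D$ by continuity and compactness of $f([0,1])$.
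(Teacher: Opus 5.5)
Your analytic argument is the same as the paper's. You write the rescaled moment generating function as $\exp(S_N^b-S_N^a)$ with $h=\log g$, and apply the trapezoid rule with a second-derivative bound uniform on compacts to get $N\phi(z)+\tfrac12\bigl(h(z,f(1))-h(z,f(0))\bigr)+O(1/N)$. You then control the $a$-part by the quadratic Taylor bound and $\max_j\delta_{N,j}\sum_j\delta_{N,j}\to0$. Your check that $g(zu)\notin(-\infty,0]$ for $u\in[0,M]$ and $z\in\widetilde{\mathscr{D}}$ is correct, and more explicit than the paper in the half-plane case. No analytic continuation is needed for the product identity: $\mathbb{E}[e^{zX_N/s_N}]=\exp\bigl(\sum_j h(z,f(j/N))-\sum_j h(z,\delta_{N,j})\bigr)$ holds pointwise wherever the logarithms are defined, since $\exp(\log w)=w$. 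The continuation argument you sketch would in any case presuppose connectedness of the domain, which you have not shown.

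The gap is in the second part of the theorem. Definition~\ref{def: mod_phi_convergence} requires the domain to be open and connected, and you never prove this for $\mathscr{D}$. Instead you attribute the hypothesis $f(0)=0$ to the vanishing of $h(z,f(0))$, which is not where it matters. For $f(0)>0$ the term $h(z,f(0))$ is just as well defined on $\mathscr{D}$, because the defining condition includes $t=0$. Your estimates therefore give locally uniform convergence on $\mathscr{D}$ for any admissible $f$. What can fail is that $\mathscr{D}$ is disconnected (cf.\ the paper's example $f(t)=t^2+8$), in which case it is not an admissible mod-$\phi$ domain. That is why the general statement is made on $\widetilde{\mathscr{D}}$, which is trivially open, connected and contains $\mathbb{R}$, being the union of a strip and a half-plane that intersect.

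The missing step is a star-shapedness argument:
\begin{itemize}
\item[(a)] Since $f$ is continuous and non-decreasing with $f(0)=0$, we have $f([0,1])=[0,M]$. So $z\in\mathscr{D}$ if and only if $g(zu)\notin(-\infty,0]$ for all $u\in[0,M]$.
\item[(b)] This condition passes to $\lambda z$ for every $\lambda\in[0,1]$, because $\lambda u\in[0,M]$. Hence $\mathscr{D}$ is star-shaped about $0$ and therefore connected.
\item[(c)] Openness follows from compactness. For $z_0\in\mathscr{D}$, the distance from $g(z_0f(t))$ to $(-\infty,0]$ is bounded below uniformly in $t\in[0,1]$. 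Uniform continuity of $(z,t)\mapsto g(zf(t))$ near $\{z_0\}\times[0,1]$ then gives a ball around $z_0$ inside $\mathscr{D}$.
\end{itemize}
With this added, your proof is complete and coincides with the paper's.
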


\begin{remark}\label{rmk:interpretation-eta-psi}
	Theorem \ref{thm:modphi-mixed} reveals a separation of scales in the asymptotic behavior of $X_N/s_N$. The normalization function $\phi$ is determined by the global shape of the profile $f$ via its integral over $[0,1]$. In contrast, the limiting function $\psi$ depends on $f$ only through the boundary values $f(0)$ and $f(1)$. Furthermore, the numerator parameters $a_{N,j}$ act as a perturbation in the sense that they do not affect the exponential rate $\phi$, but influence the limit $\psi$ solely through the aggregated shift constant $\Sigma$.
\end{remark}

\begin{remark}\label{rmk: connectedness of Df}
	Notice that the condition $f(0) = 0$ is needed for $\mathscr{D}$ to be star-shaped (which is used in the proof of Lemma \ref{lem: properties of Df}). We refer to Figure \ref{fig:plot1} for an example with a function that violates this property. We further remark that $\widetilde{\mathscr{D}}\subset \mathscr{D}$, see Lemma \ref{lem: properties of Df}. 
\end{remark}

\begin{figure}[ht]
\centering
\includegraphics[width=0.5\textwidth]{plot_2.png}
\caption{The white region is the domain $\mathscr{D}$ for the function $f(t) = t^2 + 8$.}
\label{fig:plot1}
\end{figure}

\section{Application to combinatorial statistics}\label{sec: applications}

In this section we apply the main theorem to specific combinatorial statistics. All proofs can be found in Section \ref{sec:proofs_app}.

\subsection{Sums of i.i.d.\ discrete uniform random variables}

We begin our applications of Theorem \ref{thm:modphi-mixed} with a benchmark case: the sum of i.i.d.\ discrete uniform random variables. While this model can be studied through classical Fourier-analytic means, it serves as a helpful illustration of how the profile function $f$ and the shift $\Sigma$ are determined.

For each $N\in\N$, let $Y_{N,1},\dots,Y_{N,N}$ be i.i.d.\ with
$\P(Y_{N,j}=k)=1/N$ for $k=0,\dots,N-1$, and set
$X_N:=\sum_{j=1}^{N}Y_{N,j}$. For each summand, we have
$$
\E[e^{z\,Y_{N,j}/N}]
=\frac1N\sum_{k=0}^{N-1}e^{zk/N}
=\frac{1-e^{z}}{N\,(1-e^{z/N})},
$$
and we see that roots are the $N$-th roots of unity excluding $1$. For the rescaled summands, we have
$$
\E\big[e^{z\,X_N/N}\big]
=\Bigl(\frac{1}{N}\Bigr)^{N}\big(\frac{1-e^{z}}{1-e^{z/N}}\big)^{N}
=\Bigl(\prod_{j=1}^{N}\frac{1}{N}\Bigr)\Big(\prod_{j=1}^{N}\frac{1-e^{z}}{1-e^{z/N}}\Big).
$$
The following statement is an immediate consequence of Theorem~\ref{thm:modphi-mixed}, it could likewise be inferred from the general mod-$\phi$ convergence results established in~\cite{feray2018mod}.

\begin{theorem}[Mod-$\phi$ convergence for sums of discrete uniforms]\label{thm: mod_phi_uniform_sum}
Let $X_N$ be as above. Then $X_N/N$ converges mod-$\phi$ with parameter $w_N=N$, and normalization function and limiting function given by
\[
\phi(z):=\log\Bigl(\frac{e^{z}-1}{z}\Bigr),\qquad \psi(z):=e^{-z/2},
\]
on the domain $\mathscr{D} = \{z\in \C: \Re z < 0 \text{ or } \Im z \in (-\pi, \pi)\}$.
\end{theorem}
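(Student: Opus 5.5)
We obtain Theorem~\ref{thm: mod_phi_uniform_sum} by specializing Theorem~\ref{thm:modphi-mixed}. From the product representation above, take $s_N := N$, $b_{N,j} := N$ and $a_{N,j} := 1$ for all $j=1,\dots,N$. Then $b_{N,j}\ge a_{N,j}$ and, since $a_{N,j}\mid b_{N,j}$, each factor is a genuine probability generating function. The prefactor $\prod_{j} a_{N,j}/b_{N,j} = N^{-N}$ agrees with the displayed formula, so $\E[e^{zX_N/N}]$ is exactly of the form \eqref{eq:DefKN}.

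Next we verify the hypotheses. The profile is the constant function $f\equiv 1$, because $b_{N,j}/s_N = 1 = f(j/N)$. It lies in $C^2[0,1]$, is non-decreasing, and has $f(1)=1>0$, so $M=1$. The numerator parameters are $\delta_{N,j} = 1/N$, hence $\max_j \delta_{N,j} = 1/N\to 0$ and $\sum_{j=1}^N \delta_{N,j} = 1$ for every $N$, giving $\Sigma = 1$. Theorem~\ref{thm:modphi-mixed} then yields mod-$\phi$ convergence of $X_N/N$ with $w_N=N$ on $\widetilde{\mathscr{D}} = \{\Im z\in(-\pi,\pi)\}\cup\{\Re z<0\}$. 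This is precisely the domain $\mathscr{D}$ in the statement. The stronger domain from the second part of Theorem~\ref{thm:modphi-mixed} is not available, since $f(0)=1\neq 0$.

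Finally we compute the limiting objects. Since $f$ is constant, $\phi(z) = \int_0^1 \log\bigl(\frac{e^{z}-1}{z}\bigr)\,dt = \log\bigl(\frac{e^{z}-1}{z}\bigr)$. In the formula for $\psi$, the two boundary logarithms coincide because $f(0)=f(1)=1$, so they cancel. What remains is $\psi(z) = \exp(-z\Sigma/2) = e^{-z/2}$.

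The only point needing care is that the principal logarithm $\log\bigl(\frac{e^z-1}{z}\bigr)$ is well defined and analytic on $\widetilde{\mathscr{D}}$, i.e.\ that $\frac{e^z-1}{z}\notin(-\infty,0]$ there. This is already part of the conclusion of Theorem~\ref{thm:modphi-mixed}, via the inclusion $\widetilde{\mathscr{D}}\subset\mathscr{D}$ of Lemma~\ref{lem: properties of Df}, so no additional argument is needed. We also note that $\phi$ is real and strictly convex on $\R$, since it is the cumulant generating function of a $\mathrm{Uniform}[0,1]$ variable, and that $\psi$ does not vanish. Consistently with the heuristic $X_N/N \approx Y_N + Z$, the limit $\psi(z)=e^{-z/2}$ reflects the deterministic shift between the discrete uniform on $\{0,\dots,N-1\}/N$ and the continuous uniform on $[0,1]$.
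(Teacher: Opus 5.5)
Your proposal is correct and follows the paper's proof exactly: you specialize Theorem~\ref{thm:modphi-mixed} with $s_N=N$, $a_{N,j}\equiv 1$, $b_{N,j}\equiv N$, so that $f\equiv 1$, $M=1$ and $\Sigma=1$. This gives $\phi(z)=\log\bigl(\frac{e^z-1}{z}\bigr)$, cancelling boundary terms in $\psi$ that leave $e^{-z/2}$, and the domain $\widetilde{\mathscr{D}}$ with $M=1$, just as in the paper's appeal to the theorem together with Remark~\ref{rmk: connectedness of Df}.
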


\subsection{Number of inversions in a random permutation}

An inversion of a permutation $\pi$ on the set $\{1,\ldots,N\}$ is a pair $(i,j)\in\{1,\ldots,N\}^2$ with $1\le i < j \le N$ and $\pi(i) > \pi(j)$. Let $X_N$ be the number of inversions in a uniform random permutation of the set $\{1,\ldots,N\}$. The moment generating function for the number of inversions is given by
\[ 
\E\big[e^{z X_N}\big] = \frac{1}{N!}\prod_{j=1}^N \frac{1-e^{zj}}{1-e^z},
\]
see, for example, \cite{heerten2024probabilistic}.  In other words, $X_N=Y_1+\ldots+Y_N$ with independent random variables $Y_j$, each uniformly distributed on $\{0,\ldots,j-1\}$, $j=1,\ldots,N$. In particular, for the rescaled variable $X_N/N$ we have
\[ 
\E\big[e^{z X_N/N}\big] = \frac{1}{N!}\prod_{j=1}^N \frac{1-e^{zj/N}}{1-e^{z/N}}.
\]
The following is a consequence of the main theorem.

\begin{theorem}[Mod-$\phi$ convergence for number of inversions]\label{thm: mod_phi_convergence_inversions}
	For each $N\in\N$ let $X_N$ be the number of inversions of a uniform random permutation of size $N$. Then the sequence of random variables $X_N/N$ converges mod-$\phi$ to the limiting function
	\[
	\psi(z) := \sqrt{\frac{e^z-1}{z}}e^{-z/2}
	\]
	with parameters $w_N=N$ and normalization function
	\[
	\phi(z) := \int_0^1 \log\Bigl(\frac{e^{tz}-1}{{tz}}\Bigr)dt
	\]
	on the domain $\mathscr{D} = \{z\in \C: \frac{e^{tz}-1}{{tz}}\not\in (-\infty, 0] \text{ for all } t\in [0,1]\}$.
\end{theorem}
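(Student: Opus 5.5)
We will obtain Theorem~\ref{thm: mod_phi_convergence_inversions} as a direct specialization of Theorem~\ref{thm:modphi-mixed}. First we write the moment generating function of $X_N/N$ in the form \eqref{eq:DefKN}. We choose $s_N=N$, $b_{N,j}=j$ and $a_{N,j}=1$ for $j=1,\ldots,N$. The prefactor is $\prod_{j=1}^N a_{N,j}/b_{N,j}=1/N!$, which matches the displayed formula. We also have $b_{N,j}\ge a_{N,j}$, and since $a_{N,j}\mid b_{N,j}$ each factor is a genuine probability generating function.

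Next we check the hypotheses of Theorem~\ref{thm:modphi-mixed}.
\begin{itemize}
\item[(i)] Since $b_{N,j}/s_N=j/N=f(j/N)$ with $f(t)=t$, the profile $f$ is non-decreasing, lies in $C^2[0,1]$, and satisfies $f(1)=1>0$ and $f(0)=0$.
\item[(ii)] We have $\delta_{N,j}=1/N$, so $\max_j\delta_{N,j}=1/N\to0$.
\item[(iii)] We have $\sum_{j=1}^N\delta_{N,j}=1$, so $\Sigma=1$.
\end{itemize}
Because $f(0)=0$, the strengthened part of Theorem~\ref{thm:modphi-mixed} applies. It gives mod-$\phi$ convergence with $w_N=N$ on
\[
\mathscr{D}=\bigl\{z\in\C:\tfrac{e^{zt}-1}{zt}\notin(-\infty,0]\text{ for all }t\in[0,1]\bigr\},
\]
which is exactly the domain in the statement. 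Substituting $f(t)=t$ into $\phi$ yields $\phi(z)=\int_0^1\log\bigl(\frac{e^{tz}-1}{tz}\bigr)\,dt$ immediately.

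The only step needing care is the evaluation of $\psi$, since the term involving $f(0)=0$ must be read through the convention $\frac{e^{zu}-1}{zu}=1$ at $u=0$. This term is then $\log 1=0$, and we obtain
\[
\psi(z)=\exp\Bigl(\tfrac12\log\tfrac{e^z-1}{z}-\tfrac z2\Bigr).
\]
On $\mathscr{D}$, taking $t=1$ shows $\frac{e^z-1}{z}\notin(-\infty,0]$. Hence the principal logarithm is well defined there, and $\exp(\tfrac12\log w)$ equals the principal square root $\sqrt{w}$. This gives $\psi(z)=\sqrt{\frac{e^z-1}{z}}\,e^{-z/2}$, as claimed.

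Finally, $\psi$ does not vanish on $\mathscr{D}\cap\R$, because $\frac{e^x-1}{x}>0$ for real $x$. This condition is in any case inherited from Theorem~\ref{thm:modphi-mixed}. The main obstacle is therefore just this bookkeeping of the boundary convention at $f(0)=0$ and of the branch of the square root; the analytic work is entirely contained in Theorem~\ref{thm:modphi-mixed}.
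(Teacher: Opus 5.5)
Your proposal is correct and takes the same approach as the paper's proof: specialize Theorem~\ref{thm:modphi-mixed} with $s_N=N$, $a_{N,j}=1$, $b_{N,j}=j$, $f(t)=t$ and $\Sigma=1$, then read off $\phi$, $\psi$ and the domain. Your extra remarks on the convention at $f(0)=0$ and on the principal square root are bookkeeping that the paper leaves implicit.
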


The next lemma provides a closed form representation for the function $\phi$ of Theorem \ref{thm: mod_phi_convergence_inversions} on $\mathscr{D}\setminus [0,\infty)$ in terms of Euler's dilogarithm $\operatorname{Li}_2(z)$, which is defined by the power series
$$
\operatorname{Li}_2(z) := \sum_{k=1}^{\infty} \frac{z^k}{k^2}, \qquad \text{ for } |z| < 1,
$$
and by analytic continuation on $\mathbb{C} \setminus [1,\infty)$.

\begin{lemma}\label{lem: integral identity}
For $z \in \mathscr{D}$, with $\mathscr{D}$ as in Theorem \ref{thm: mod_phi_convergence_inversions}, we have 
\begin{align*}
	\int_0^1 \log\Bigl(\frac{e^{tz}-1}{{tz}}\Bigr)dt = -\frac{1}{z}\Bigl(\operatorname{Li}_2(e^z)-\frac{\pi^2}{6}\Bigr)-\log(-z)+1.
\end{align*}
\end{lemma}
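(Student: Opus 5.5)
We prove the identity first on a real interval and then extend it by analytic continuation. Concretely, we verify it for real $z<0$ and invoke the identity theorem, after checking that both sides are analytic on a connected domain containing $(-\infty,0)$. We take $\mathscr{D}\setminus[0,\infty)$ as the working region, since this is where the closed form is claimed to live. The right-hand side contains $\log(-z)$, which is analytic on $\mathbb{C}\setminus[0,\infty)$. It also contains $\operatorname{Li}_2(e^z)$, which is analytic wherever $e^z\notin[1,\infty)$, that is, away from $\{\Re z\ge 0,\ \Im z\in 2\pi\mathbb{Z}\}$. On the left, the integrand is analytic in $z$ wherever $\frac{e^{tz}-1}{tz}\notin(-\infty,0]$ for all $t$, so the integral is analytic on $\mathscr{D}$ by differentiation under the integral sign (uniform bounds on compacts). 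We will use connectedness and the star-shaped property of $\mathscr{D}$ from Lemma~\ref{lem: properties of Df}, and argue that the intersection with the domain of the right-hand side is still connected.

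For the computation on $z<0$, write
\[
\log\Bigl(\frac{e^{tz}-1}{tz}\Bigr)=\log(1-e^{tz})-\log(-tz),
\]
which is legitimate because both $1-e^{tz}$ and $-tz$ are positive there. The second term integrates directly:
\[
\int_0^1\log(-tz)\,dt=\log(-z)-1 .
\]
For the first term, substitute $u=tz$ and expand $\log(1-e^{u})=-\sum_{k\ge1}e^{ku}/k$; the series converges for $u<0$ and is integrable near $u=0$ by monotone convergence. This gives
\[
\int_0^1\log(1-e^{tz})\,dt=-\frac1z\sum_{k\ge1}\frac{1-e^{kz}}{k^2}\cdot\Bigl(-1\Bigr)\cdot(-1),
\]
and after sorting out the signs,
\[
\int_0^1\log(1-e^{tz})\,dt=-\frac1z\Bigl(\operatorname{Li}_2(e^z)-\frac{\pi^2}{6}\Bigr),
\]
using $\operatorname{Li}_2(1)=\pi^2/6$. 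The sign check is as follows: $\frac{d}{dt}\bigl[\operatorname{Li}_2(e^{tz})\bigr]=-z\log(1-e^{tz})$, so the integral equals $-\frac1z\bigl[\operatorname{Li}_2(e^{z})-\operatorname{Li}_2(1)\bigr]$. Combining the two pieces yields the stated formula for negative reals.

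The main obstacle is the continuation step, specifically justifying that the two sides agree on all of $\mathscr{D}$ rather than merely on the connected component containing the negative axis. Two points need care. First, the branches: the principal $\log$ of the quotient must equal $\log(1-e^{tz})-\log(-tz)$ only up to $2\pi i$ multiples, and by continuity in $t$ and $z$ this discrepancy vanishes, since it is zero for real negative $z$ and is locally constant. Second, the domain of validity: the formula presupposes $z\notin[0,\infty)$ (as the text preceding the lemma indicates) and also excludes points with $e^z\in[1,\infty)$. I would therefore state the identity on the connected open set $\mathscr{D}\setminus[0,\infty)$. I would show this set is connected using the star-shapedness of $\mathscr{D}$: removing a ray from the star center direction leaves a connected set, and one can connect any point to the negative axis. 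I would also check that the points $z=x+2\pi i m$ with $x>0$, $m\neq0$, lie outside $\mathscr{D}$. This holds because at $t=1$ such a $z$ gives $\frac{e^{z}-1}{z}$ equal to a positive number divided by $z$, and varying $t$ along $[0,1]$ crosses the negative axis; I would verify this via the argument of $tz$. With this in hand, the identity theorem finishes the proof.
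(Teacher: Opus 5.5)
Your proposal is correct and follows the paper's argument. You compute the integral explicitly where $\Re z<0$, via $\log(1-e^{u})=-\sum_{k\ge1}e^{ku}/k$, equivalently $\frac{d}{dt}\operatorname{Li}_2(e^{tz})=-z\log(1-e^{tz})$; you do this on the negative axis, the paper on the half-plane. You then extend to $\mathscr{D}\setminus[0,\infty)$ by the identity theorem.

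Your extra checks supply details the paper only asserts: $\mathscr{D}\setminus[0,\infty)$ is connected by star-shapedness, and $x+2\pi i m\notin\mathscr{D}$, so $\operatorname{Li}_2(e^{z})$ is analytic there. You are also right that the general branch identity for $\log\frac{e^{tz}-1}{tz}$ is unnecessary once the identity theorem is used. Your intermediate series display has the wrong sign (your derivative check already gives the correct result), so clean that up.
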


Theorem \ref{thm: mod_phi_convergence_inversions} deals with the number of inversions of a random permutation in the classical Coxeter group $A_{N-1}$. The Coxeter group $B_N$ corresponds similarly to inversions of so-called signed permutations. Let now $X_N$ denote the number of inversions of a uniform random signed permutation of $B_N$. Then, the moment generating function of $X_N/N$ is given by
\[
\E[e^{zX_N/N}] = \frac{1}{ 2^NN!} \prod_{j=1}^N \frac{1-e^{2zj/N}}{1-e^{z/N}},
\]
see \cite[Chapter 7]{bjorner2005combinatorics} or \cite[Chapter 2]{lehrer2009unitary} for more details. For this Coxeter group, we have the following result.

\begin{theorem}[Mod-\(\phi\) convergence for \(B_N\)]\label{thm:modphi-colored}
Let $X_N$ denote the number of inversions of a random signed permutation of $B_N$. Then \(X_N/N\) converges mod-\(\phi\) with parameter \(N\), domain
\[
	\mathscr{D}=\{z\in\C:\ \tfrac{e^{2zt}-1}{2zt}\notin(-\infty,0] \text{ for all } t\in[0,1]\},
\]
normalization function
\[
\phi(z)=\int_0^1 \log\Bigl(\frac{1-e^{2zt}}{-2zt}\Bigr)\,dt,
\]
and limiting function
\[
\psi(z)=\exp\Bigl(\frac12\log\Bigl(\frac{1-e^{2z}}{-2z}\Bigr)-\frac{z}{2}\Bigr)
=\sqrt{\frac{e^{2 z}-1}{2 z}}\;e^{-z/2}.
\]
\end{theorem}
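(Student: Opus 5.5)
We obtain Theorem~\ref{thm:modphi-colored} as a direct specialization of Theorem~\ref{thm:modphi-mixed}, using its second part for profiles with $f(0)=0$. With $s_N=N$ we match the generating function to the form \eqref{eq:DefKN} by taking $b_{N,j}=2j$ and $a_{N,j}=1$ for $j=1,\dots,N$. Then the normalizing constant $\prod_j a_{N,j}/b_{N,j}=1/(2^N N!)$ agrees with the prefactor, and $b_{N,j}\ge a_{N,j}$ holds. The profile is $b_{N,j}/s_N=2j/N=f(j/N)$ with $f(t)=2t$. This $f$ is in $C^2[0,1]$, non-decreasing, satisfies $f(1)=2>0$, and has $f(0)=0$. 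Next, $\delta_{N,j}=1/N$, so $\max_j\delta_{N,j}=1/N\to0$ and $\sum_j\delta_{N,j}=1$, giving $\Sigma=1$. Note that $a_{N,j}=1$ divides $b_{N,j}$, so each factor is a genuine probability generating function; this is consistent with the assumption that we have a probability generating function, though the theorem does not need it.

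Applying the second part of Theorem~\ref{thm:modphi-mixed}, the domain is
\[
\mathscr{D}=\bigl\{z:\tfrac{e^{2zt}-1}{2zt}\notin(-\infty,0]\ \forall t\in[0,1]\bigr\},
\]
which is the domain in the statement. The normalization function is $\phi(z)=\int_0^1\log\bigl(\frac{e^{2zt}-1}{2zt}\bigr)\,dt$. This equals the stated integrand, since $\frac{1-e^{2zt}}{-2zt}=\frac{e^{2zt}-1}{2zt}$ identically, so the principal logarithms coincide.

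For the limiting function, the convention $\frac{e^{zu}-1}{zu}=1$ at $u=0$ makes the $f(0)$ term equal to $\log 1=0$. This leaves
\[
\psi(z)=\exp\Bigl(\tfrac12\log\bigl(\tfrac{e^{2z}-1}{2z}\bigr)-\tfrac z2\Bigr).
\]
On $\mathscr{D}$, taking $t=1$ shows that $\frac{e^{2z}-1}{2z}\notin(-\infty,0]$. Hence $\exp(\frac12\log w)$ is the principal square root $\sqrt w$, which gives $\psi(z)=\sqrt{\frac{e^{2z}-1}{2z}}\,e^{-z/2}$.

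No step is a genuine obstacle, since this is bookkeeping. The only points needing care are the correct identification $a_{N,j}=1$, $b_{N,j}=2j$, so that the prefactor $2^N N!$ matches, and checking that the two expressions for the logarithm and the square root agree on $\mathscr{D}$ with the principal branch.
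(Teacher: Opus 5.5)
Your proposal is correct and is essentially the paper's proof: both specialize Theorem~\ref{thm:modphi-mixed} with $a_{N,j}\equiv 1$, $b_{N,j}=2j$, $s_N=N$, profile $f(t)=2t$ and $\Sigma=1$. The paper leaves several points implicit that you spell out: the prefactor $1/(2^N N!)$, the use of the $f(0)=0$ case to get the larger domain $\mathscr{D}$, and the principal-branch identification of $\psi$.
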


\begin{remark}
It is possible to derive an alternative representation for $\phi$. Namely, for $z\in\mathscr{D}\setminus[0, \infty)$,
$$
\phi(z)
= -\frac{1}{2z}\Bigl(\operatorname{Li}_2(e^{2z})-\frac{\pi^2}{6}\Bigr)
- \log(-2z) + 1.
$$
\end{remark}

As mentioned above, Mod-$\phi$ convergence provides a unified route to central and local limit theorems as well as precise moderate and large deviation asymptotics. For the inversion statistic, asymptotic normality and refined coefficient asymptotics have been obtained by saddle-point methods in \cite{Bender1973CLLT,LouchardProdinger2003Inversions}, see also \cite{Margolius2001PermutationsInversions}. Large deviation principles for atypical inversion densities can moreover be deduced from \cite{BorgaDasMukherjeeWinkler2024LDP}, where the case of inversions is treated explicitly. For the signed permutation groups $B_N$, central and local limit theorems are proved in \cite{KahleStump2020CoxeterInversions}.

\subsection{Descending plane partitions}

Our next application demonstrates the flexibility of Theorem \ref{thm:modphi-mixed} by considering a model where the profile function $f(t)$ is non-linear and the shift parameters $a_{N,j}$ are non-constant. Precisely, we consider a sequence of random variables $X_N$ whose rescaled moment generating functions are of the form
\begin{equation}\label{eq:mgfDPP}
\E[e^{zX_N/N^2}] = \frac{1}{N!} \prod_{j=1}^N \frac{1-e^{zj^2/N^2}}{1-e^{zj/N^2}}.
\end{equation}
This is a scaled form of the generating polynomial of the number of so-called descending plane partitions of size $N$. Intuitively, descending plane partitions can be visualized as two-dimensional arrays of positive integers where the entries strictly decrease along each row. Their primary interest in the literature stems from the celebrated fact that they are equinumerous with alternating sign matrices. For background material we refer to \cite[Section 4.3.4]{heerten2024probabilistic} and the references cited therein.

\begin{theorem}[Mod-$\phi$ convergence for descending plane partitions]\label{thm: mod_phi_j_squared}
	For each $N\in\N$ let $X_N$ be a random variable with rescaled moment generating function of the form \eqref{eq:mgfDPP}. Then the sequence of random variables $X_N/N^2$ converges mod-$\phi$ with parameter $N$, and normalization function and limiting function given by
	\begin{align*}
	\phi(z) &:= \int_0^1\log\Bigl(\frac{1-e^{zt^2}}{-zt^2}\Bigr) dt, \\
	\psi(z) &:= \exp\Bigl( \frac{1}{2}\log\Bigl(\frac{1-e^{z}}{-z}\Bigr) - \frac{z}{4} \Bigr) = \sqrt{\frac{e^z-1}{z}}\, e^{-z/4}.
	\end{align*}
	on $\mathscr{D}$ as in Theorem \ref{thm:modphi-mixed}. 
\end{theorem}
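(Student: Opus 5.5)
This result is a direct specialization of Theorem~\ref{thm:modphi-mixed}, so the plan is to match \eqref{eq:mgfDPP} to the form \eqref{eq:DefKN} and check the hypotheses. Take $s_N = N^2$, $b_{N,j} = j^2$ and $a_{N,j} = j$; then $b_{N,j}\ge a_{N,j}$. The constant prefactor is
\[
\prod_{j=1}^N \frac{a_{N,j}}{b_{N,j}} = \prod_{j=1}^N \frac{1}{j} = \frac{1}{N!},
\]
which agrees with \eqref{eq:mgfDPP}. Since $a_{N,j}=j$ divides $b_{N,j}=j^2$, each factor is itself a probability generating function, so the product is a genuine moment generating function.

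Next come the hypotheses of Theorem~\ref{thm:modphi-mixed}.

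\textbf{Profile.} We have $b_{N,j}/s_N = (j/N)^2 = f(j/N)$ with $f(t)=t^2$. This $f$ is non-decreasing and lies in $C^2[0,1]$, with $f(1)=1>0$ and $f(0)=0$.

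\textbf{Shifts.} Here $\delta_{N,j} = j/N^2$, so $\max_j \delta_{N,j} = 1/N \to 0$. Moreover
\[
\sum_{j=1}^N \delta_{N,j} = \frac{N(N+1)}{2N^2} \to \tfrac12,
\]
so $\Sigma = 1/2$.

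Because $f(0)=0$, the second part of Theorem~\ref{thm:modphi-mixed} applies, giving mod-$\phi$ convergence with $w_N=N$ on the larger domain $\mathscr{D}$.

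It remains to read off $\phi$ and $\psi$. For $\phi$, the theorem gives $\int_0^1 \log\bigl(\frac{e^{zt^2}-1}{zt^2}\bigr)\,dt$, and $\frac{e^{zt^2}-1}{zt^2} = \frac{1-e^{zt^2}}{-zt^2}$, so this is the stated $\phi$.

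For $\psi$, start from the theorem's formula with $f(1)=1$. The term $-\tfrac12\log\bigl(\frac{e^{zf(0)}-1}{zf(0)}\bigr)$ vanishes, because the convention $\frac{e^{zu}-1}{zu}=1$ at $u=0$ makes its argument $1$. The shift term is $-\frac z2\Sigma = -\frac z4$. This gives
\[
\psi(z) = \exp\Bigl(\tfrac12\log\tfrac{e^z-1}{z} - \tfrac z4\Bigr) = \sqrt{\tfrac{e^z-1}{z}}\,e^{-z/4},
\]
where the square root is the principal branch, well defined on $\mathscr{D}$.

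There is no real obstacle here. The only points needing care are the removable-singularity convention at $f(0)=0$ and confirming that the principal logarithm is well defined on $\mathscr{D}$. The latter holds by the definition of $\mathscr{D}$ at $t=1$.
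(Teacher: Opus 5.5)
Your proposal is correct and follows the paper's proof essentially step for step: specialize Theorem~\ref{thm:modphi-mixed} with $s_N=N^2$, $a_{N,j}=j$, $b_{N,j}=j^2$, profile $f(t)=t^2$ and shift $\Sigma=1/2$, then read off $\phi$ and $\psi$. Your extra remarks are accurate and fill in details the paper leaves implicit: the exact value $\max_j\delta_{N,j}=1/N$, the use of $f(0)=0$ to get the larger domain $\mathscr{D}$, and the vanishing of the $f(0)$ boundary term via the convention at $u=0$.
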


\subsection{Inversions in random multiset permutations}

Fix $N\in\N$ and a vector  $m^{(N)}=(m_1^{(N)},\dots,m_{r(N)}^{(N)})$ of natural numbers with $\sum_{i=1}^{r(N)} m_i^{(N)}=n_N \in\N$. Let $\pi_N$ be a uniformly random permutation of the multiset having $m_i^{(N)}$ copies of the letter $i$ for each $i$, and let $X_N$ denote its number of inversions. By MacMahon's $q$-multinomial formula \cite[Equations (2) and (3)]{novick2009bijective},
\[
\sum_{\pi} q^{\mathrm{inv}(\pi)}=\binom{n_N}{m_1^{(N)},\dots,m_{r(N)}^{(N)}}_{q}
=\frac{\prod_{j=1}^{n_N}(1-q^{j})}{\prod_{i=1}^{r(N)}\prod_{j=1}^{m_i^{(N)}}(1-q^{j})}
\]
is the polynomial whose coefficient of $q^k$ equals the number of distinct multiset permutations with exactly $k$ inversions. Let $T = \frac{n_N!}{\prod_{i=1}^{r(N)} m_i^{(N)}!}$ be the total number of multiset permutations (which corresponds to the value at $q\to 1$) and let $N_k$ be the number of permutations with $k$ inversions. Then,
$$
\E[q^{X_N}] = \frac{1}{T}\sum_{k\ge 0} N_k q^k.
$$
Hence, setting $q=e^{z/n_N}$ yields the rescaled moment generating function
\[
\E\big[e^{z X_N/n_N}\big] = \Big(\prod_{j=1}^{n_N}\frac{a_{N,j}}{j}\Big)\Big(\prod_{j=1}^{n_N}\frac{1-e^{z\,j/n_N}}{1-e^{z\,a_{N,j}/n_N}}\Big), \] where the sequence of numerator parameters is $b_{N,j}=j$ for $j=1,\ldots,n_N$, and the denominator parameters $a_{N,j}$ are given by the concatenated sequence \[ (1,2,\dots,m_1^{(N)}, \dots, 1,2,\dots,m_{r(N)}^{(N)}). \] Note that $\sum_{i=1}^{r(N)} m_i^{(N)} = n_N$, so there are exactly $n_N$ terms in both the numerator and denominator products, matching the form of \eqref{eq:DefKN}.

\begin{theorem}[Mod-$\phi$ convergence for multiset inversions]\label{thm:modphi-multiset-inversions}
Assume that 
$$
M_N:=\max_{1\le i\le r(N)} m_i^{(N)} =  o(n_N),
$$
and set $s_N=n_N$. Suppose the limit
\[
\Sigma\ :=\ \lim_{N\to\infty}\frac{1}{n_N}\sum_{i=1}^{r(N)}\frac{m_i^{(N)}\bigl(m_i^{(N)}+1\bigr)}{2}
\]
exists in $\R$. Then $X_N/n_N$ converges mod-$\phi$ with parameter $n_N$, normalization function
\[
\phi(z)\ :=\ \int_0^1 \log\Bigl(\frac{e^{tz}-1}{tz}\Bigr)\,dt,
\]
domain $\mathscr{D}=\{z\in\C:\ \frac{e^{tz}-1}{tz}\notin(-\infty,0]\ \text{for all }t\in[0,1]\}$, and limiting function
\[
\psi(z) = \exp\Bigl(\frac12\log\Bigl(\frac{e^{z}-1}{z}\Bigr)-\frac{z}{2}\,\Sigma\Bigr)
\ =\ \sqrt{\frac{e^{z}-1}{z}}\;e^{-\,\frac{z}{2}\Sigma}.
\]
\end{theorem}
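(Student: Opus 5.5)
The plan is to apply Theorem~\ref{thm:modphi-mixed} with $N$ replaced by $n_N$, number of factors $n_N$, and $s_N=n_N$. The domain claimed is the star-shaped domain $\mathscr{D}$, so we must also check $f(0)=0$. Mod-$\phi$ convergence with parameter $w_N=N$ along the index $N$ then becomes convergence with parameter $n_N$ along the subsequence $n_N$. One caveat: $n_N$ need not be increasing or injective in $N$, but $M_N=o(n_N)$ together with $M_N\ge1$ forces $n_N\to\infty$. The proof of Theorem~\ref{thm:modphi-mixed} is a statement about the arrays themselves, valid for any sequence of arrays with number of factors tending to infinity, so it applies verbatim here.

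\textbf{Step 1: the hypotheses on the numerator parameters.} The numerator parameters are $b_{N,j}=j$, so $b_{N,j}/s_N=j/n_N=f(j/n_N)$ with $f(t)=t$. This $f$ is non-decreasing, lies in $C^2[0,1]$, and has $f(1)=1>0$ and $f(0)=0$. We also need $b_{N,j}\ge a_{N,j}$, and the ordering matters here. Both products in \eqref{eq:DefKN} are commutative, so we may reorder the $a_{N,j}$ freely. Sort the multiset $\{1,\dots,m_1^{(N)},\dots,1,\dots,m_{r(N)}^{(N)}\}$ increasingly. The $k$-th smallest element is at most $k$, because at least $k$ entries (all the $1$'s, then the $2$'s, and so on, each value $v$ being preceded by values $1,\dots,v-1$ in its block) are at most the $k$-th smallest. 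Hence pairing $a_{N,(j)}$ with $b_{N,j}=j$ gives $b_{N,j}\ge a_{N,j}$.

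\textbf{Step 2: the hypotheses on $\delta_{N,j}$.} Here $\delta_{N,j}=a_{N,j}/n_N$. First, $\max_j\delta_{N,j}=M_N/n_N\to0$ by assumption. Second,
\[
\sum_j\delta_{N,j}=\frac1{n_N}\sum_{i=1}^{r(N)}\frac{m_i^{(N)}(m_i^{(N)}+1)}{2}\to\Sigma,
\]
and $\Sigma\in[0,\infty)$ because the limit exists in $\R$ and the terms are nonnegative.

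\textbf{Step 3: reading off $\phi$ and $\psi$.} Plugging $f(t)=t$ into the theorem gives $\phi(z)=\int_0^1\log\bigl(\frac{e^{tz}-1}{tz}\bigr)dt$ directly. For $\psi$, the convention $\frac{e^{zu}-1}{zu}=1$ at $u=0$ makes the $f(0)$-term vanish, since $\log 1=0$. This leaves $\psi(z)=\exp\bigl(\tfrac12\log\frac{e^z-1}{z}-\tfrac z2\Sigma\bigr)$. On $\mathscr{D}$ the principal square root is consistent with $\exp(\tfrac12\log)$, which gives the stated form. The domain $\mathscr{D}$ coincides with the one in Theorem~\ref{thm:modphi-mixed} for $f(t)=t$.

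\textbf{Main obstacle.} The only nontrivial point is the pairing in Step 1 establishing $b_{N,j}\ge a_{N,j}$. The counting argument is the following: among the first $k$ sorted entries, the value $v$ can occur only if all values $1,\dots,v-1$ from the same block occur earlier. So $v\le$ (number of entries so far) $=k$.
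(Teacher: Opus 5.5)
Your proposal is correct and follows the paper's proof. Both apply Theorem~\ref{thm:modphi-mixed} with $s_N=n_N$, $b_{N,j}=j$ and $f(t)=t$, read off $\max_j\delta_{N,j}=M_N/n_N\to0$ and obtain $\Sigma$ from the block sums, and use $f(0)=0$ to remove the boundary term in $\psi$. Your extra checks ($n_N\to\infty$, the reindexing, and $b_{N,j}\ge a_{N,j}$) cover points the paper leaves implicit. The sorting step is not needed, though: in the original concatenated order, the entry $k$ of block $i$ already sits at position $m_1^{(N)}+\dots+m_{i-1}^{(N)}+k\ge k$, so $a_{N,j}\le j=b_{N,j}$ holds without reordering.
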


\begin{remark}
	If $m_i^{(N)}\equiv 1$ for all $i$, then $\Sigma=1$ and Theorem \ref{thm:modphi-multiset-inversions} reduces to Theorem \ref{thm: mod_phi_convergence_inversions} for ordinary permutations. More generally, this result shows that as long as the largest multiplicity $M_N$ is small compared to the total size $n_N$, the global fluctuations are governed by the same profile $f(t)=t$ as in the classical case, with the specific multiset structure only appearing as a shift in the limiting function $\psi$.
\end{remark}

Classical limit theorems for the inversion number in random multiset permutations (random words with fixed content) are well-established. In particular, \cite{CongerViswanath2007} proves a quantitative normal approximation using Stein's method, and \cite{CanfieldJansonZeilberger2011} establishes asymptotic normality for the inversion count and discusses a local limit theorem. Theorem \ref{thm:modphi-multiset-inversions} complements these results by providing an alternative route to these limit theorems. In addition, it implies a  description of precise moderate and large deviation estimates.

\section{Proof of Theorem \ref{thm:modphi-mixed}}\label{sec: proofs}

In this section we will prove the main result of this work. Our first goal is to find  conditions on $a_{N,j}, b_{N,j}, s_N$ which ensure that $\lim_{N\to\infty} \exp(-N \phi(z)) K_N(z) = \psi(z)$ locally uniformly on some set $\mathscr{D} \subset \C$ for suitable functions $\phi$ and $\psi$.

\begin{lemma}[General Case Limit]\label{lem: general asymptotic framework}
Let $(s_N)_{N\in\N}$ be a sequence of positive integers. For each $N\in\N$, let $(a_{N,j})_{j=1}^N$ and $(b_{N,j})_{j=1}^N$ be sequences of positive integers. Define $h(w, z) := \log(\frac{1-e^{zw}}{-zw})$ for $w>0$, with $h(0, z) := 0$.
Recall the sequence
\[
K_N(z) := \Big( \prod_{j=1}^N \frac{a_{N,j}}{b_{N,j}} \Big) \Big(\prod_{j=1}^N \frac{1-e^{z b_{N,j}/s_N}}{1-e^{z a_{N,j}/s_N}}\Big).
\]
Define the sums $S_N^a(z) := \sum_{j=1}^N h(a_{N,j}/s_N, z)$ and $S_N^b(z) := \sum_{j=1}^N h(b_{N,j}/s_N, z)$. Assume there exist functions $I_a, I_b, C_a, C_b$ such that the following asymptotic expansions hold locally uniformly for $z$ in some domain $\mathscr{D} \subset \mathbb{C}$:
\begin{align*}
	S_N^a(z) &= N I_a(z) + C_a(z) + o(1),\qquad\text{and}\qquad S_N^b(z) = N I_b(z) + C_b(z) + o(1),
\end{align*}
as $N\to\infty$. Let $\phi(z) := I_b(z) - I_a(z)$. Then,
\begin{equation}\label{eq:21-10-25a}
	\lim_{N\to\infty} \exp\big(-N \phi(z) \big) K_N(z) = \exp\big( C_b(z) - C_a(z) \big)
\end{equation}
locally uniformly on $\mathscr{D}$.
\end{lemma}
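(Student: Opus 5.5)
The identity driving the argument is the factorwise rewriting
\[
\frac{a_{N,j}}{b_{N,j}}\cdot\frac{1-e^{zb_{N,j}/s_N}}{1-e^{za_{N,j}/s_N}}
=\frac{\dfrac{1-e^{zb_{N,j}/s_N}}{-zb_{N,j}/s_N}}{\dfrac{1-e^{za_{N,j}/s_N}}{-za_{N,j}/s_N}},
\]
which holds whenever $z\neq 0$ and the denominators are nonzero. The prefactor $a_{N,j}/b_{N,j}$ is absorbed by multiplying numerator and denominator by $-z/s_N$. With $g(w,z):=\frac{1-e^{zw}}{-zw}=\frac{e^{zw}-1}{zw}$ (and $g(0,z)=1$), this gives
\[
K_N(z)=\prod_{j=1}^N\frac{g(b_{N,j}/s_N,z)}{g(a_{N,j}/s_N,z)}.
\]
Both sides are entire in $z$, since $g(w,\cdot)$ is entire for fixed $w$. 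Hence the identity extends from the generic set to all $z$ by analytic continuation, consistent with the convention stated before \eqref{eq:DefKN}. In particular, $K_N$ has no genuine poles.

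The next step is to take logarithms. Implicit in the hypotheses is that $h(w,z)=\log g(w,z)$ is well defined with the principal branch for every $w=a_{N,j}/s_N$ and $w=b_{N,j}/s_N$ and every $z\in\mathscr{D}$; otherwise $S_N^a$ and $S_N^b$ would be meaningless. On $\mathscr{D}$ we therefore have $g=e^{h}$ factorwise, and
\[
K_N(z)=\exp\bigl(S_N^b(z)-S_N^a(z)\bigr).
\]
This holds exactly, without any branch issue, because exponentiation removes any $2\pi i$ ambiguity. One could worry that the sum of principal logarithms differs from the principal logarithm of the product, but that is irrelevant: we only exponentiate the sum, never take the logarithm of the product.

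Substituting the assumed expansions gives
\[
e^{-N\phi(z)}K_N(z)=\exp\bigl(C_b(z)-C_a(z)+r_N(z)\bigr),
\]
where $r_N(z):=\bigl(S_N^b-NI_b-C_b\bigr)-\bigl(S_N^a-NI_a-C_a\bigr)\to0$ locally uniformly on $\mathscr{D}$.

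It remains to pass the limit through the exponential uniformly. On a compact set $K\subset\mathscr{D}$ I would need $C_b-C_a$ to be bounded. This follows if $C_a$ and $C_b$ are continuous, and I would take that as implicit in "functions" arising as locally uniform limits. Alternatively, bounded on compacts suffices, because $C_b-C_a=\lim\bigl(S_N^b-NI_b\bigr)-\lim\bigl(S_N^a-NI_a\bigr)$ is a uniform limit of continuous functions whenever $I_a$ and $I_b$ are continuous.

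Given a bound $|C_b-C_a|\le B$ on $K$, the elementary estimate $|e^{u+r}-e^{u}|\le e^{|u|}\,|r|\,e^{|r|}$ yields
\[
\sup_K\Bigl|e^{-N\phi}K_N-e^{C_b-C_a}\Bigr|\le e^{B}\sup_K|r_N|\,e^{\sup_K|r_N|}\longrightarrow0.
\]

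The only real subtlety, and the step I would treat carefully, is the justification of the product-to-exponential identity: the removable singularities, and the fact that the logarithm branch is never inverted. The limit passage itself is routine.
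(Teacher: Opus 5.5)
Your proof is correct and follows the paper's argument. Rewriting each factor via $1-e^{zw}=-zw\,e^{h(w,z)}$ gives $K_N=\exp(S_N^b-S_N^a)$ on $\mathscr{D}$, and the expansions are then substituted. Your explicit bound for passing the limit through the exponential fills in a step the paper leaves implicit. The continuity you assume there is in fact automatic: $I_a$ is the locally uniform limit of the continuous functions $S_{N+1}^a-S_N^a$, hence $C_a=\lim(S_N^a-NI_a)$ is continuous too, and likewise for $I_b$ and $C_b$.

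One side claim is wrong, though harmless. A quotient of entire functions need not be entire, and for general $a_{N,j},b_{N,j}$ the function $K_N$ can have genuine poles; for example, $\frac23\,\frac{1-e^{3z}}{1-e^{2z}}$ has a pole at $z=i\pi$. You never need that claim, because on $\mathscr{D}$ each $g(a_{N,j}/s_N,z)$ avoids $(-\infty,0]$ and so is nonzero.
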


\begin{proof}
We work with the principal branch of the logarithm. For $z \in \mathscr{D}$ such that the denominator terms do not vanish, the definition of $h(w,z)$ gives
\[
1-e^{zw}=-zw\exp(h(w,z)).
\]
Substituting this identity directly into the product defining $K_N(z)$, we obtain
\begin{align*}
	K_N(z) &= \Big( \prod_{j=1}^N \frac{a_{N,j}}{b_{N,j}} \Big) \Big(\prod_{j=1}^N \frac{-z(b_{N,j}/s_N)\exp(h(b_{N,j}/s_N,z))}{-z(a_{N,j}/s_N)\exp(h(a_{N,j}/s_N,z))}\Big) \\
	&= \exp\Big(S_N^b(z) - S_N^a(z)\Big).
\end{align*}
Using the assumed asymptotic expansions for $S_N^a$ and $S_N^b$, we obtain
\begin{align*}
	K_N(z) &= \exp\Big( \big(N I_b(z) + C_b(z) + o(1)\big) - \big(N I_a(z) + C_a(z) + o(1)\big) \Big) \\
	&= \exp\Big( N\phi(z) + C_b(z) - C_a(z) + o(1) \Big).
\end{align*}
Multiplying by $\exp(-N\phi(z))$ and taking the limit $N\to\infty$ yields the asserted result.
\end{proof}

\begin{remark}
Note that the right-hand side of \eqref{eq:21-10-25a} is always positive for real-valued $z$. In particular, it does not vanish on $\mathscr{D}\cap\R$, as required in Definition~\ref{def: mod_phi_convergence}.
\end{remark}

We will need in our next argument the trapezoid rule for the integral of a function $f$ on an interval $[a,b]$ that can be found in \cite{cerone2019trapezoidal} or \cite[Section 3, Corollary 1]{dragomir1999some}.

\begin{proposition}[Trapezoid rule]\label{prop: trapezoid rule}
	Let $a<b$ and $f\in C^2[a,b]$ be a twice continuously differentiable function. Then, for any $n\in\N$,
	\[
	\int_a^b f(x)\,dx = A_{n}(f) + R_{n}(f),
	\]
	where
	\[
	A_{n}(f) = \frac{b-a}{n}\Big( \frac{f(a)+f(b)}{2} + \sum_{i=1}^{n-1} f\bigl(a + \frac{b-a}{n}i\bigr) \Big)
	\]
	and the remainder satisfies $|R_{n}(f)| \le \frac{(b-a)^3 \|f''\|_{\infty}}{12n^2}$.
\end{proposition}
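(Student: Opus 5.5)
The plan is to reduce the statement to a single subinterval, obtain an exact error representation there by integrating by parts against a quadratic Peano kernel, and then add up the local errors. Put $h:=(b-a)/n$ and $x_i:=a+ih$ for $i=0,\dots,n$. Grouping the interior nodes shows
\[
A_n(f)=\sum_{i=1}^{n}\frac h2\bigl(f(x_{i-1})+f(x_i)\bigr),
\]
since each interior node is counted once from the left and once from the right, each time with weight $h/2$, while the endpoints appear only once. Hence
\[
R_n(f)=\sum_{i=1}^n E_i, \qquad E_i:=\int_{x_{i-1}}^{x_i} f(x)\,dx-\frac h2\bigl(f(x_{i-1})+f(x_i)\bigr),
\]
and it suffices to prove $|E_i|\le h^3\|f''\|_\infty/12$ for each $i$.

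For one subinterval $[\alpha,\beta]$ with $\beta-\alpha=h$, I will use the kernel $k(x):=\tfrac12(x-\alpha)(x-\beta)$. It satisfies $k''\equiv 1$, $k(\alpha)=k(\beta)=0$ and $k'(\alpha)=-h/2$, $k'(\beta)=h/2$. Integrating by parts twice gives
\[
\int_\alpha^\beta f''(x)k(x)\,dx=\bigl[f'k\bigr]_\alpha^\beta-\bigl[fk'\bigr]_\alpha^\beta+\int_\alpha^\beta f(x)k''(x)\,dx .
\]
The first boundary term vanishes. The second equals $\frac h2\bigl(f(\alpha)+f(\beta)\bigr)$, and the last integral is $\int_\alpha^\beta f$. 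Therefore
\[
E=\int_\alpha^\beta f''(x)\,k(x)\,dx .
\]
Only $f\in C^2$ is needed for these integrations by parts, which matches the hypothesis.

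Since $k\le 0$ on $[\alpha,\beta]$, we get $|E|\le \|f''\|_\infty\int_\alpha^\beta |k|$. The elementary integral is $\frac12\int_0^h u(h-u)\,du=\frac{h^3}{12}$. Summing over the $n$ subintervals then yields
\[
|R_n(f)|\le n\cdot\frac{h^3}{12}\|f''\|_\infty=\frac{(b-a)^3\|f''\|_\infty}{12n^2}.
\]

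There is no real obstacle here. The only step needing care is the sign and bookkeeping in the double integration by parts, namely checking that the boundary terms reproduce exactly the trapezoid weights $h/2$. I would verify this by testing the identity on $f\equiv1$ and $f(x)=x$, where both sides vanish, and on $f(x)=x^2$, where the error is $-h^3/6$ and so the bound holds with equality up to the constant. Alternatively, one can cite the mean-value form $E=-\frac{h^3}{12}f''(\xi)$, which follows from the same kernel representation because $k$ does not change sign.
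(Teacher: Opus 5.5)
Your proof is correct. The regrouping of $A_n(f)$ into local trapezoid sums, the identity $E=\int_\alpha^\beta f''(x)\,k(x)\,dx$ with $k(x)=\tfrac12(x-\alpha)(x-\beta)$, the value $\int_\alpha^\beta|k|=h^3/12$, and the final summation all check out.

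The paper does not prove this proposition; it cites it from \cite{cerone2019trapezoidal,dragomir1999some}. The classical derivation of this bound uses the same kernel identity, so you have written out a self-contained version of what the paper imports.

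Your route also suits the way the paper uses the result. In Lemma~\ref{lem: asymptotics E-M} the proposition is applied to the complex-valued function $t\mapsto F(t,z)$. Your estimate $|E|\le\|f''\|_\infty\int_\alpha^\beta|k|$ holds verbatim for complex-valued integrands. The mean-value form $E=-\tfrac{h^3}{12}f''(\xi)$, which you offer as an alternative, is valid only for real-valued $f$. For complex $f$ one would have to treat real and imaginary parts separately and lose a constant factor. That loss is harmless for the $O(1/N)$ claim in the lemma, but it does not give the stated inequality.

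One small correction to your sanity check. For $f(x)=x^2$ one has $\|f''\|_\infty=2$, so $|E|=h^3/6$ equals the bound exactly, not just up to a constant. This shows that the constant $1/12$ is sharp.
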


We now proceed to derive the asymptotic expansions for the sums $S_N^a(z)$ and $S_N^b(z)$ required by Lemma \ref{lem: general asymptotic framework}.

\begin{lemma}[Asymptotics via Trapezoid Rule]\label{lem: asymptotics E-M}
Assume $a_{N,j}/s_N = f(j/N)$ for some function $f \in C^2[0, 1]$ with $f(t) \ge 0$.
Let
$$
\mathscr{D} = \Big\{z \in \mathbb{C} \mid \frac{1-e^{zf(t)}}{-zf(t)} \notin (-\infty, 0] \text{ for all } t \in [0, 1] \text{ such that } f(t) > 0\Big\}.
$$
Then, for $z \in \mathscr{D}$, the sum $S_N^a(z) = \sum_{j=1}^N h(a_{N,j}/s_N, z)$ admits the asymptotic expansion
\[
S_N^a(z) = N \int_0^1 h(f(t), z) dt + \frac{h(f(1), z) - h(f(0), z)}{2} + O(1/N),
\]
which holds locally uniformly for $z \in \mathscr{D}$.
\end{lemma}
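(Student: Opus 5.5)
The plan is to apply the trapezoid rule (Proposition~\ref{prop: trapezoid rule}) on $[0,1]$ with $n=N$ to the function
\[
g_z(t) := h(f(t),z) = \log\Bigl(\frac{1-e^{zf(t)}}{-zf(t)}\Bigr).
\]
The first step is to check that $g_z$ is well defined and $C^2$ on $[0,1]$. Introduce the entire function $E(u) := (e^{u}-1)/u$ with $E(0)=1$. Then $g_z(t) = \log E(zf(t))$, and this formula is consistent with the convention $h(0,z)=0$ at points where $f(t)=0$.

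For $z\in\mathscr{D}$ the value $E(zf(t))$ avoids $(-\infty,0]$:
\begin{itemize}
\item when $f(t)>0$, this is the definition of $\mathscr{D}$;
\item when $f(t)=0$, we have $E(0)=1$.
\end{itemize}
Since the principal $\log$ is holomorphic on $\C\setminus(-\infty,0]$, the composition $t\mapsto \log E(zf(t))$ is $C^2$ in $t$. It is also jointly continuous, with $t$-derivatives up to order two continuous in $(t,z)$, by the chain rule:
\[
\partial_t g_z = \frac{E'(zf)}{E(zf)}\, z f', \qquad \partial_t^2 g_z = \Bigl(\frac{E'}{E}\Bigr)'(zf)\, z^2 f'^2 + \frac{E'}{E}(zf)\, z f''.
\]

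The second step is the bookkeeping. With nodes $t_j=j/N$, the trapezoid sum is
\[
A_N(g_z) = \frac1N\Bigl(\frac{g_z(0)+g_z(1)}{2} + \sum_{j=1}^{N-1} g_z(j/N)\Bigr).
\]
Since $S_N^a(z) = \sum_{j=1}^N g_z(j/N)$, this gives
\[
S_N^a(z) = N A_N(g_z) + \frac{g_z(1)-g_z(0)}{2}.
\]
Proposition~\ref{prop: trapezoid rule} then yields
\[
S_N^a(z) = N\int_0^1 g_z(t)\,dt + \frac{h(f(1),z)-h(f(0),z)}{2} - N R_N(g_z),
\qquad |N R_N(g_z)| \le \frac{\|\partial_t^2 g_z\|_\infty}{12N}.
\]
This is exactly the claimed expansion, with an error of size $O(1/N)$ for each fixed $z$.

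The last step, which I expect to be the only real obstacle, is making the $O(1/N)$ uniform on compact sets $K\subset\mathscr{D}$. This requires $\sup_{z\in K}\sup_{t\in[0,1]}|\partial_t^2 g_z(t)|<\infty$.

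I would argue by compactness. The set $\{zf(t): z\in K,\ t\in[0,1]\}$ is the continuous image of the compact set $K\times[0,1]$, so it is compact in $\C$. By the first step, $E$ maps it into $\C\setminus(-\infty,0]$. In particular $E$ does not vanish there, so $E'/E$ and its derivative are continuous and hence bounded on this compact set. Together with the boundedness of $|z|\le \max_K|z|$ and of $f',f''$ on $[0,1]$, this bounds $\partial_t^2 g_z$ uniformly over $K\times[0,1]$.

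One subtlety to note is the role of the principal branch. We only need $\log\circ E$ to be holomorphic near the image set, and this holds precisely because $E$ avoids the branch cut there. The argument never requires $\mathscr{D}$ to be open, but openness is implicit in "locally uniformly". It follows from continuity of $(z,t)\mapsto E(zf(t))$ together with compactness of $[0,1]$, provided the condition "for all $t$" is read as $E(zf(t))\notin(-\infty,0]$ for every $t$, including those with $f(t)=0$, where it holds automatically.
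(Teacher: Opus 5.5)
Your proposal is correct and follows essentially the same route as the paper: the trapezoid rule with $n=N$, the same bookkeeping (adding $g_z(1)$ to pass from the interior nodes to $S_N^a$), and a compactness argument on $[0,1]\times K$ to bound $\partial_t^2 g_z$ uniformly. The paper simply asserts that the $t$-derivatives are jointly continuous in $(t,z)$. Your explicit chain-rule formula, combined with the observation that $E$ stays off $(-\infty,0]$ on the compact image $\{zf(t)\}$, makes that uniformity step more transparent, but it is the same argument.
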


\begin{proof}
Let $F(t,z) := h(f(t),z)$, where we recall that $h(w,z) = \log(\frac{1-e^{zw}}{-zw})$. To apply the trapezoid rule (Proposition \ref{prop: trapezoid rule}), we must examine the regularity of $t \mapsto F(t,z)$.

Fix a compact subset $K \subset \mathscr{D}$. For any $z \in K$, the function $w \mapsto h(w,z)$ is analytic in a neighbourhood of the range of $f$. Since $f \in C^2[0,1]$, the composite function $F(t,z)$ is $C^2$ with respect to $t$ on $[0,1]$. Crucially, since $F(t,z)$ and its derivatives are continuous functions of $(t,z)$ on the compact set $[0,1] \times K$, the second partial derivative with respect to $t$ is uniformly bounded, i.e.
\[
\sup_{z \in K} \sup_{t \in [0,1]} \left| \frac{\partial^2}{\partial t^2} F(t,z) \right| \le C_K < \infty.
\]
Proposition \ref{prop: trapezoid rule} implies that for any fixed $z \in K$,
\[
\int_0^1 F(t, z) dt = \frac{1}{N}\Big(\frac{F(0, z) + F(1, z)}{2} + \sum_{j=1}^{N-1} F(j/N, z)\Big) + R_N(z),
\]
where $|R_N(z)| \le \frac{C_K}{12 N^2}$. Multiplying by $N$ and rearranging the terms yields
\[
\sum_{j=1}^{N-1} F(j/N, z) = N \int_0^1 F(t, z) dt - \frac{F(0, z) + F(1, z)}{2} + O(1/N),
\]
where the error term is uniform in $z \in K$. Finally, adding the last term $F(1,z) = F(N/N, z)$ to both sides, we obtain
\begin{align*}
	\sum_{j=1}^{N} F(j/N, z) &= N \int_0^1 F(t, z) dt - \frac{F(0, z) + F(1, z)}{2} + F(1,z) + O(1/N) \\
	&= N \int_0^1 F(t, z) dt + \frac{F(1, z) - F(0, z)}{2} + O(1/N).
\end{align*}
This completes the proof.
\end{proof}

Next, we study geometric properties of the domains $\mathscr{D}$ and $\widetilde{\mathscr{D}}$ introduced in Lemma \ref{lem: asymptotics E-M} and Theorem \ref{thm:modphi-mixed}.

\begin{lemma}[Properties of the domain]\label{lem: properties of Df}
Let $f \in C[0, 1]$ be a non-negative function. Then the set $\mathscr{D}$ is open.
If, in addition, $f$ is monotonically increasing and $f(1)>0$, then $\widetilde{\mathscr{D}}$ is open and connected. If, furthermore, $f(0) = 0$, then $\mathscr{D}$ is star-shaped and hence connected.
\end{lemma}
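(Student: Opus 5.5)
We prove the three assertions of Lemma~\ref{lem: properties of Df} separately. Throughout, write $g(u):=\frac{e^{u}-1}{u}$ (with $g(0)=1$). This is an entire function, and $\frac{1-e^{zf(t)}}{-zf(t)}=g(zf(t))$.

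\emph{Openness of $\mathscr{D}$.} We show that the complement is closed. Take $z_n\notin\mathscr{D}$ with $z_n\to z$. For each $n$ there is $t_n\in[0,1]$ with $f(t_n)>0$ and $g(z_nf(t_n))\in(-\infty,0]$. By compactness we may pass to a subsequence with $t_n\to t^*$. By continuity of $(t,z)\mapsto g(zf(t))$ and closedness of $(-\infty,0]$, we get $g(zf(t^*))\in(-\infty,0]$.

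If $f(t^*)>0$, then $z\notin\mathscr{D}$ and we are done. If $f(t^*)=0$, then $g(zf(t^*))=g(0)=1\notin(-\infty,0]$, which contradicts the limit relation. So this case cannot occur; the point is that $g(z_nf(t_n))\to 1$. Hence the complement is closed and $\mathscr{D}$ is open.

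\emph{Openness and connectedness of $\widetilde{\mathscr{D}}$.} The set $\widetilde{\mathscr{D}}$ is a union of an open horizontal strip and an open half-plane, so it is open. Both pieces are convex, hence connected. They intersect, for example at $z=-1$. A union of two connected sets with nonempty intersection is connected.

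Monotonicity of $f$ is not needed for this part. Its role is elsewhere: together with $f(1)=M$, it gives $0\le f(t)\le M$. This yields the inclusion $\widetilde{\mathscr{D}}\subset\mathscr{D}$ mentioned in Remark~\ref{rmk: connectedness of Df}, which follows by a routine check:
\begin{itemize}
\item For $|\Im z|<\pi/M$, the point $u=zf(t)$ has $|\Im u|<\pi$. For such $u$ one checks that $g(u)=\int_0^1 e^{su}\,ds$ is not a nonpositive real number.
\item For $\Re u<0$ the same holds, using the argument of $e^{u}-1$.
\end{itemize}

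\emph{Star-shapedness when $f(0)=0$.} This is the main obstacle. We show that $\mathscr{D}$ is star-shaped with respect to $0$. First, $0\in\mathscr{D}$, since $g(0)=1$.

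Let $z\in\mathscr{D}$ and $\lambda\in[0,1]$. We must check that $g(\lambda zf(t))\notin(-\infty,0]$ for every $t$ with $f(t)>0$. There are two cases.
\begin{itemize}
\item If $\lambda=0$, then $\lambda z=0\in\mathscr{D}$.
\item If $\lambda\in(0,1]$, then $\lambda f(t)\in[0,f(t)]$. Since $f$ is continuous with $f(0)=0$, the intermediate value theorem gives some $t'\in[0,t]$ with $f(t')=\lambda f(t)>0$. Then $g(\lambda zf(t))=g(zf(t'))$, and this is not in $(-\infty,0]$ because $z\in\mathscr{D}$.
\end{itemize}
Thus $\lambda z\in\mathscr{D}$, so $\mathscr{D}$ is star-shaped, hence path-connected and therefore connected. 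Monotonicity is not needed here either; continuity and $f(0)=0$ suffice.

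The only delicate point is recognizing that the condition defining $\mathscr{D}$ is a condition on the set of values $\{zw: w\in f([0,1])\setminus\{0\}\}$. When $f(0)=0$, the range $f([0,1])$ is an interval $[0,\max f]$, and this makes the scaling argument work.
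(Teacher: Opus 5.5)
Your proof is correct and follows essentially the same route as the paper: openness of $\mathscr{D}$ by compactness (you show the complement is sequentially closed, while the paper uses uniform continuity and a positive distance to $(-\infty,0]$), the strip-plus-half-plane description for $\widetilde{\mathscr{D}}$, and star-shapedness from the fact that membership in $\mathscr{D}$ depends only on the range $f([0,1])\setminus\{0\}=(0,\max f]$, which is stable under scaling by $\lambda\in(0,1]$. Your remark that monotonicity is not needed for star-shapedness is also correct, since the intermediate value theorem together with $f(0)=0$ already gives that range, so this slightly sharpens the paper's argument.
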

\begin{proof}
Let $g(w) := \frac{1-e^{w}}{-w}$ for $w \ne 0$ and $g(0) := 1$. This function is entire. Recall that the set $\mathscr{D}$ is defined as
\[
\mathscr{D} = \{z \in \mathbb{C} \mid g(zf(t)) \notin (-\infty, 0] \text{ for all } t \in [0, 1] \text{ with } f(t)>0\}.
\]
To see that $\mathscr{D}$ is open, fix $z_0\in \mathscr{D}$ and consider the continuous map $H(z,t):=g(zf(t))$ on $\C\times[0,1]$. Because $g(z_0f(t))$ avoids the closed set $(-\infty,0]$ for every $t$, the compactness of $[0,1]$ implies that the distance
\[
\delta:=\inf_{t\in[0,1]}\,d\bigl(g(z_0f(t)),(-\infty,0]\bigr)
\]
is strictly positive. The uniform continuity of $H$ on some compact neighbourhood of $(z_0,[0,1])$ provides an $\varepsilon>0$ such that $|z-z_0|<\varepsilon$ implies $|H(z,t)-H(z_0,t)|<\delta/2$ for all $t$ and $z$ in that neighbourhood. Consequently, $g(zf(t))$ remains outside $(-\infty,0]$. Therefore $B(z_0,\varepsilon)\subset \mathscr{D}$, which shows that $\mathscr{D}$ is open.

Next, we establish connectedness, considering first the case where $f(0)$ is not necessarily zero. Assume that $f$ is monotonically increasing and set $M:=f(1)>0$. Clearly, if $\Re z<0$ then $g\bigl(zf(t)\bigr)\notin(-\infty,0]$ for all $t\in[0,1]$, so $\{z\in\C:\Re z<0\}\subset \mathscr{D}$. It remains to show that $g(w)\notin(-\infty,0]$ whenever $|\,\Im w\,|<\pi$. Using the representation $g(w)=\int_{0}^{1}e^{sw}\,ds$, we compute for $w=x+iy$,
\[
\Im g(x+iy)=\int_{0}^{1}e^{sx}\sin(sy)\,ds.
\]
Since $e^{sx}$ is positive, the sign of the integrand depends on the sign of $\sin(sy)$. If $y\in (0,\pi)$, then $sy\in (0, \pi)$ for $s\in(0,1)$, and thus $\sin(sy)$ is positive, making the integral strictly positive. Similarly, if $y\in (-\pi, 0)$, the integral is strictly negative. Hence, $g(w)\notin(-\infty,0]$ whenever $|\,\Im w\,|<\pi$. Since $0\le f(t)\le M$ for $t\in[0,1]$, it follows that $|\,\Im z\,|<\pi/M$ implies $|\,\Im(zf(t))\,|<\pi$ for all $t$, and thus $z\in\mathscr{D}$. This shows that the set
\[
\widetilde{\mathscr{D}}=\{z\in\C:\Re z<0\}\cup\{z\in\C:|\,\Im z\,|<\pi/M\}
\]
is contained in $\mathscr{D}$ and is connected.

Finally, assume $f(0) = 0$. Since $f$ is continuous and increasing, we have $f([0,1])=[0,M]$. Let $S:=\{w\in\C: g(w)\in(-\infty,0]\}$. Fix $z\in \mathscr{D}$. Then $g\bigl(zf(t)\bigr)\notin(-\infty,0]$ for all $t\in(0,1]$ with $f(t)>0$, which implies $g(zy)\notin(-\infty,0]$ for all $y\in(0,M]$. Therefore, the line segment
\[
\Gamma_z:=\{zy:0<y\le M\}
\]
is disjoint from $S$. It follows that every shorter segment
\[
\Gamma_{\lambda z}:=\{\lambda zy:0<y\le M\},\qquad 0<\lambda\le 1,
\]
is also disjoint from $S$. Hence, $\lambda z\in \mathscr{D}$ for all $\lambda\in(0,1]$, meaning the full line segment $[0,z]$ lies inside of $\mathscr{D}$. Therefore, $\mathscr{D}$ is star-shaped with respect to the origin and, in particular, connected.
\end{proof}

In a next step we study analyticity properties of the normalization function $\phi$.

\begin{lemma}[Properties of the limiting integral]\label{lem: properties of eta_f}
Let $f \in C[0,1]$ be a non-negative function that is not identically zero. The function $\phi$ defined by
\[
\phi(z) := \int_0^1 \log\Bigl(\frac{e^{zf(t)}-1}{zf(t)}\Bigr)\,dt
\]
is analytic on the domain 
$$
\mathscr{D} = \{z \in \mathbb{C} \mid \frac{e^{zf(t)}-1}{zf(t)} \notin (-\infty, 0] \text{ for all } t \in [0, 1] \text{ with } f(t) > 0\}.
$$
Its restriction to the real axis is real-valued and strictly convex.
\end{lemma}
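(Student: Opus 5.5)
Write $g(w)=\frac{e^{w}-1}{w}=\int_0^1 e^{sw}\,ds$, as in the proof of Lemma \ref{lem: properties of Df}. Then $\phi(z)=\int_0^1 \log g(zf(t))\,dt$, with the convention that the integrand is $0$ whenever $f(t)=0$. The plan is to establish analyticity by differentiating under the integral sign, or equivalently by Morera's theorem, and then to treat the real-axis claims through a variance representation of $\phi''$.

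\emph{Analyticity.} Fix a compact set $K\subset\mathscr{D}$. For every $t$ with $f(t)>0$ and every $z\in K$, the value $g(zf(t))$ avoids $(-\infty,0]$, so $z\mapsto \log g(zf(t))$ is analytic on a neighbourhood of $K$. When $f(t)=0$ the integrand is identically $0$.

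For local boundedness, note that the map $(z,t)\mapsto g(zf(t))$ is continuous on $K\times[0,1]$. Its image is therefore a compact set contained in $\C\setminus(-\infty,0]$:
\begin{itemize}
\item for $t$ with $f(t)>0$ this follows from the definition of $\mathscr{D}$;
\item for $t$ with $f(t)=0$ the value is $g(0)=1$.
\end{itemize}
Since $\log$ is continuous on $\C\setminus(-\infty,0]$, the integrand is jointly continuous and uniformly bounded on $K\times[0,1]$. Continuity of $\phi$ then follows from dominated convergence. For any closed triangle in a disc contained in $\mathscr{D}$, Fubini's theorem and Cauchy's theorem applied to each $t$-slice give a vanishing contour integral. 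By Morera's theorem, $\phi$ is analytic on $\mathscr{D}$. Connectedness of $\mathscr{D}$ is not needed for this step.

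\emph{Real-valuedness.} For real $z$, $g(zf(t))=\int_0^1 e^{szf(t)}\,ds>0$. The principal logarithm is therefore real, so $\phi$ is real on $\R$, and in particular $\R\subset\mathscr{D}$.

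\emph{Strict convexity.} This is the main step. Put $\ell(x):=\log g(x)$ for real $x$. Then $\ell(x)=\log \E[e^{xU}]$ with $U$ uniform on $[0,1]$, so $\ell$ is a cumulant generating function. Hence $\ell''(x)=\operatorname{Var}_x(U)$, the variance of $U$ under the exponentially tilted law with density proportional to $e^{xs}$ on $[0,1]$. Because the tilted law is non-degenerate, $\ell''(x)>0$ for every $x\in\R$.

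By the chain rule, $\partial_z^2\,\ell(zf(t))=f(t)^2\,\ell''(zf(t))\ge 0$. Differentiating under the integral is justified by the same uniform bounds on compact real intervals, which apply because the $z$-derivatives are continuous in $(z,t)$. This gives
\[
\phi''(z)=\int_0^1 f(t)^2\,\ell''(zf(t))\,dt .
\]
Since $f$ is continuous and not identically zero, $f(t)^2>0$ on a set of $t$ of positive measure, and the integrand is strictly positive there. Hence $\phi''(z)>0$ for every real $z$, which gives strict convexity on $\R$.

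The only delicate point I expect is the uniform-on-compacts justification. It reduces to the fact that the continuous image of the compact set $K\times[0,1]$ stays away from the cut $(-\infty,0]$.
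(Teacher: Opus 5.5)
Your proposal is correct and follows essentially the same route as the paper. Analyticity comes from joint continuity of $(z,t)\mapsto \log g(zf(t))$ together with analyticity in $z$ for each fixed $t$; you spell out the compactness argument keeping $g(zf(t))$ off the cut and the Fubini--Morera step, which the paper only asserts. Strict convexity comes from $\phi''(x)=\int_0^1 f(t)^2\,k''(xf(t))\,dt$, where $k''$ is the variance of an exponentially tilted uniform law.
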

\begin{proof}
Let $k(w) := \log\bigl(\frac{e^w-1}{w}\bigr)$. This function is analytic on the set $\mathbb{C} \setminus \{w : \frac{e^w-1}{w} \in (-\infty, 0]\}$ and takes real values for real $w$. Because $f$ is continuous and non-negative, the map $(z,t) \mapsto k(zf(t))$ is jointly continuous on $\mathscr{D} \times [0,1]$ and analytic in $z$ for each fixed $t$. Therefore, the parameter integral
\[
\phi(z) = \int_0^1 k(zf(t))\,dt
\]
is analytic on $\mathscr{D}$. Specifically, in a neighborhood of $z=0$, we can expand $k(w) = \sum_{n=1}^{\infty} c_n w^{n}$. Since $f$ is bounded, the series for $k(z f(t))$ converges uniformly for $t\in[0,1]$, justifying term-by-term integration:
\[
\phi(z) = \sum_{n=1}^{\infty} c_n z^{n} \int_{0}^{1} (f(t))^{n}\,dt.
\]
Thus, $\phi$ is analytic at $z=0$ with $\phi(0)=0$.

For real $x \in \mathscr{D} \cap \mathbb{R}$, the integrand is real, so $\phi(x) \in \mathbb{R}$. By the  Leibniz integral rule, we may differentiate under the integral sign:
\[
\phi''(x) = \int_0^1 (f(t))^2 k''\bigl(x f(t)\bigr)\,dt.
\]
Note that $k(u) = \log \mathbb{E}[e^{uU}]$ where $U$ is uniformly distributed on $[0,1]$. Thus, $k''(u) = \text{Var}_u(U) > 0$ for all $u \in \mathbb{R}$, where $\text{Var}_u(U)$ refers to the variance of the random variable $U$ under the exponentially tilted measure with parameter $u$. Since $f$ is not identically zero and continuous, $f(t) > 0$ on a set of positive measure. The integrand is non-negative everywhere and strictly positive on a set of positive measure, implying $\phi''(x) > 0$. Consequently, $\phi$ is strictly convex on $\mathbb{R}$.
\end{proof}

We now verify that, under the assumptions of Theorem \ref{thm:modphi-mixed}, the expansion for $S_N^a$ required by Lemma \ref{lem: general asymptotic framework} holds.

\begin{lemma}[Asymptotics via Taylor Expansion]\label{lem: asymptotics Taylor}
Let $s_N \in \N$ be a sequence of natural numbers and, for each $N\in\N$, let $(a_{N,j})_{j=1}^N$ be a sequence of natural numbers. Let $h(w, z) := \log\bigl(\frac{1-e^{zw}}{-zw}\bigr)$ for $w \neq 0$ and $h(0, z):=0$. For each $N \in \N$, set $\delta_{N,j} := a_{N,j}/s_N$. Assume that $\max_{1\le j \le N} \delta_{N,j} \to 0$ as $N\to\infty$ and that the limit
\[
\Sigma := \lim_{N\to\infty} \sum_{j=1}^N \delta_{N,j}
\]
exists in $\R$. Then, for every compact set $K\subset\C$, the sum $S_N^a(z) = \sum_{j=1}^N h(a_{N,j}/s_N, z)$ admits the asymptotic expansion
\[
S_N^a(z) = \frac{z}{2}\,\Sigma + o(1),
\]
uniformly for $z\in K$.
\end{lemma}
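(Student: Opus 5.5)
The plan is a Taylor expansion of $w\mapsto h(w,z)$ at $w=0$, with the remainder controlled uniformly for $z$ in the compact set $K$. Write $g(u)=\frac{e^{u}-1}{u}=\int_0^1 e^{su}\,ds$, which is entire with $g(0)=1$. Note also that $\frac{1-e^{zw}}{-zw}=g(zw)$, so $h(w,z)=\log g(zw)$. Since $g(u)\to 1$ as $u\to 0$, there is $r>0$ with $|g(u)-1|<1/2$ for $|u|\le r$. On this disc, $\log g$ is therefore analytic, with principal branch and $\log g(0)=0$.

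The expansion there is
\[
\log g(u)=\frac{u}{2}+R(u),\qquad |R(u)|\le C|u|^2 \quad (|u|\le r).
\]
The linear coefficient is $g'(0)=\int_0^1 s\,ds=1/2$. The bound $|R(u)|\le C|u|^2$ holds on the closed disc because $R(u)/u^2$ is analytic there, hence bounded.

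Now fix the compact set $K$ and let $R_K:=\sup_{z\in K}|z|$. By the hypothesis $\max_j\delta_{N,j}\to0$, there is $N_0$ such that $R_K\max_j\delta_{N,j}\le r$ for all $N\ge N_0$. For these $N$ and all $z\in K$, each argument $u=z\delta_{N,j}$ lies in the disc, so
\[
S_N^a(z)=\sum_{j=1}^N\Big(\frac{z\delta_{N,j}}{2}+R(z\delta_{N,j})\Big).
\]
The terms with $\delta_{N,j}=0$ are harmless, since $h(0,z)=0$ matches $R(0)=0$. The error then satisfies
\[
\Big|\sum_j R(z\delta_{N,j})\Big|\le CR_K^2\sum_j\delta_{N,j}^2\le CR_K^2\,\max_j\delta_{N,j}\,\sum_j\delta_{N,j}.
\]
This tends to $0$ uniformly on $K$, because $\sum_j\delta_{N,j}$ is bounded (it converges to $\Sigma$) while the maximum tends to zero. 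For the main term, $\frac z2\sum_j\delta_{N,j}-\frac z2\Sigma$ is bounded by $\frac{R_K}{2}\,\bigl|\sum_j\delta_{N,j}-\Sigma\bigr|\to0$, again uniformly on $K$. Together these give $S_N^a(z)=\frac z2\Sigma+o(1)$ uniformly on $K$.

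The one delicate point is the branch of the logarithm. The sum $S_N^a$ uses the principal branch, so I must make sure it agrees with the power-series branch of $\log g$ near $1$. The choice $|g-1|<1/2$ settles this, because it keeps $g(u)$ in the right half-plane, where the principal logarithm equals the series $\sum_{n\ge1}(-1)^{n+1}(g-1)^n/n$. This is also why no domain restriction on $z$ is needed beyond compactness: for large $N$ every argument $z\delta_{N,j}$ is small, whatever $z\in K$ is.
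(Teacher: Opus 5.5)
Your proof is correct and takes essentially the same route as the paper: a uniform second-order Taylor bound $|h(w,z)-\tfrac{z}{2}w|\le C_K|w|^2$ for $z\in K$ and small $w$, combined with $\sum_j \delta_{N,j}^2\le \max_j\delta_{N,j}\sum_j\delta_{N,j}\to 0$. Your extra steps make explicit what the paper leaves implicit: the reduction $h(w,z)=\log g(zw)$, the choice $|g(u)-1|<1/2$ that fixes the principal branch, and the check that $\tfrac{z}{2}\bigl(\sum_j\delta_{N,j}-\Sigma\bigr)\to 0$ uniformly on $K$.
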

\begin{proof}
Fix a compact set $K\subset\C$. We first determine the local behavior of $h(w,z)$ near $w=0$. Expanding the exponential term for small $|w|$, we have
\[
\frac{1-e^{zw}}{-zw} = \frac{1 - (1 + zw + \frac{1}{2}z^2w^2 + O(w^3))}{-zw} = 1 + \frac{z}{2}w + O(w^2).
\]
Taking the logarithm yields
\[
h(w,z) = \log\Bigl(1 + \frac{z}{2}w + O(w^2)\Bigr) = \frac{z}{2}w + O(w^2).
\]
Thus, there exist constants $C_K>0$ and $\varepsilon>0$ such that
\[
\Bigl|h(w,z) - \frac{z}{2}w\Bigr| \le C_K |w|^2 \qquad\text{for all }z\in K\text{ and }|w|\le\varepsilon.
\]
Since $\max_j \delta_{N,j} \to 0$, we may assume $\delta_{N,j} \le \varepsilon$ for all $j$ provided $N$ is sufficiently large. We define the remainder terms $R_{N,j}(z) := h(\delta_{N,j},z) - \frac{z}{2}\delta_{N,j}$. The sum can then be written as
\[
S_N^a(z) = \sum_{j=1}^N \frac{z}{2}\delta_{N,j} + \sum_{j=1}^N R_{N,j}(z).
\]
The first term converges to $\frac{z}{2}\Sigma$ by assumption. For the remainder sum, we apply the quadratic bound
\[
\Bigl|\sum_{j=1}^N R_{N,j}(z)\Bigr| \le \sum_{j=1}^N C_K \delta_{N,j}^2 \le C_K \Bigl(\max_{1\le j\le N}\delta_{N,j}\Bigr)\sum_{j=1}^N \delta_{N,j}.
\]
The right-hand side tends to $0$ as $N\to\infty$ because the maximum tends to $0$ while the sum converges to $\Sigma$. Thus, the remainder vanishes uniformly for $z\in K$, proving the claim.
\end{proof}

We are now in a position to complete the proof of Theorem \ref{thm:modphi-mixed}. Recall that $h(w,z)=\log(\frac{1-e^{zw}}{-zw})$ with $h(0,z)=0$.

\begin{proofof}{Theorem {\ref{thm:modphi-mixed}}}
First, we determine the asymptotic behavior of the numerator term $S_N^{b}(z)$ using Lemma~\ref{lem: asymptotics E-M} with the profile function $f$. This yields
\begin{align*}
S_N^{b}(z) &= N \int_0^1 h(f(t),z)\,dt + \frac{h\bigl(f(1),z\bigr) - h\bigl(f(0),z\bigr)}{2} + o(1)\\
&= N\phi(z) + \frac{h\bigl(f(1),z\bigr) - h\bigl(f(0),z\bigr)}{2} + o(1),
\end{align*}
locally uniformly for $z \in \mathscr{D}$.
Next, for the denominator term $S_N^{a}(z)$, Lemma~\ref{lem: asymptotics Taylor} provides the expansion
\[
S_N^{a}(z) = \frac{z}{2}\,\Sigma + o(1).
\]
Substituting these two expansions into the general framework of Lemma~\ref{lem: general asymptotic framework}, we obtain
\[
\exp\bigl(-N\phi(z)\bigr)\,K_N(z) \longrightarrow \exp\Bigl(\frac{h\bigl(f(1),z\bigr) - h\bigl(f(0),z\bigr)}{2} - \frac{z}{2}\,\Sigma\Bigr) = \psi(z),
\]
as $N\to\infty$, locally uniformly for $z\in \mathscr{D}$.

To conclude, we verify the requirements (i)--(iv) of Definition~\ref{def: mod_phi_convergence}:
\begin{itemize}
	\item[(i)] The parameter sequence $w_N = N$ clearly tends to infinity.
	\item[(ii)] Regarding the domain: By Lemma~\ref{lem: properties of Df}, since $f$ is monotonic and $f(1)>0$, the set $\widetilde{\mathscr{D}}$ is an open, connected subset of $\C$ (and $\widetilde{\mathscr{D}}\subset \mathscr{D}$). In the case $f(0)=0$, the larger set $\mathscr{D}$ itself is connected.
	\item[(iii)] By Lemma~\ref{lem: properties of eta_f}, the normalization function $\phi$ is analytic on $\mathscr{D}$ and its restriction to the real axis is strictly convex.
	\item[(iv)] Finally, the limiting function $\psi$ is analytic on $\mathscr{D}$ and, as the exponential of a real-valued function for real arguments, it does not vanish on $\mathscr{D}\cap\R$.
\end{itemize}
Thus, all conditions for mod-$\phi$ convergence are satisfied and the proof is complete.
\end{proofof}

\section{Proofs of the applications}\label{sec:proofs_app}

\begin{proofof}{Theorem {\ref{thm: mod_phi_uniform_sum}}}
This follows from Theorem \ref{thm:modphi-mixed} and Remark \ref{rmk: connectedness of Df} with $s_N=N$, $a_{N,j}\equiv1$, $b_{N,j}\equiv N$, so $f(t)\equiv1$. Then $\delta_{N,j}=a_{N,j}/s_N=1/N\to0$ and we have
\[
\Sigma=\lim_{N\to\infty}\sum_{j=1}^{N}\frac1N=1.
\]
Hence,
\[
\phi(z)=\int_{0}^{1}\log\Bigl(\frac{e^{z f(t)}-1}{z f(t)}\Bigr)\,dt=\log\Bigl(\frac{e^{z}-1}{z}\Bigr),
\]
and with $f(0)=f(1)=1$ Theorem~\ref{thm:modphi-mixed} yields the limiting function
\[
\psi(z)=\exp\Bigl(\tfrac12 h(1,z)-\tfrac12 h(1,z)-\tfrac{z}{2}\Sigma\Bigr)=e^{-z/2},
\]
where $h(w,z):=\log\bigl(\frac{1-e^{zw}}{-zw}\bigr)$ as in Section~\ref{sec: proofs}. This completes the argument.
\end{proofof}

\begin{proofof}{Theorem {\ref{thm: mod_phi_convergence_inversions}}}
In this example we have $a_{N,j}=1$ and $b_{N,j}=j$ for $j=1,\ldots,N$, and $s_N=N$. Moreover, $f(t)=t$ for $t\in[0,1]$. The  hypotheses of Theorem~\ref{thm:modphi-mixed} are thus satisfied with
\[
\phi(z)=\int_{0}^{1} \log\Bigl(\frac{e^{tz}-1}{tz}\Bigr)\,dt.
\]
Here, $\delta_{N,j}=a_{N,j}/s_N=1/N\to0$ as $N\to\infty$ and we obtain
\[
\Sigma=\lim_{N\to\infty}\sum_{j=1}^{N}\frac1N=1.
\]
Together with $c_1(z)=z/2$ (implicitly used in Lemma~\ref{lem: asymptotics Taylor}), this yields the limiting function
\[
\psi(z)=\exp\Bigl(\frac12\log\Bigl(\frac{1-e^{z}}{-z}\Bigr)-\frac{z}{2}\Bigr)=\sqrt{\frac{e^{z}-1}{z}}e^{-z/2}.
\]
Hence $X_N/N$ converges mod-$\phi$ with parameter $N$, normalization function $\phi$ and limiting function $\psi$, as stated.
\end{proofof}

\begin{proofof}{Lemma {\ref{lem: integral identity}}}
We start by showing the relation
\begin{align}\label{eq:log_relation}
\log\Bigl(\frac{e^{tz}-1}{tz}\Bigr) = \log\Bigl(\frac{1-e^{tz}}{-tz}\Bigr) = \log(1-e^{tz}) - \log(-z) - \log t
\end{align}
for $z \in \mathscr{D} \setminus [0, \infty)$ and $t \in (0, 1]$. Let $x := 1-e^{tz}$ and $y := -tz$. Since $\log(-z) + \log t = \log(-zt) = \log y$, as $\operatorname{Arg}(-z) + \operatorname{Arg}(t) = \operatorname{Arg}(-z) \in (-\pi, \pi]$, the relation can be rewritten as $\log(x/y) = \log x - \log y$. This identity holds for the principal branch of the logarithm if and only if
\begin{equation} \label{eq:cond}
	\operatorname{Arg}(x) - \operatorname{Arg}(y) \in (-\pi, \pi].
\end{equation}
Let $L(z,t) := x/y = \frac{1-e^{tz}}{-tz}$. The definition of the domain $\mathscr{D}$ requires $L(z,t) \notin (-\infty, 0]$ for all $t \in [0,1]$. This means the principal argument $\operatorname{Arg}(L(z,t))$ must not be equal to $\pi$. Let $f(t) := \operatorname{Arg}(x) - \operatorname{Arg}(y)$. The principal argument of $L(z,t)$ is related to $f(t)$ by
\[
\operatorname{Arg}(L(z,t)) = f(t) + 2\pi N(t)
\]
where $N(t)$ is an integer chosen such that $\operatorname{Arg}(L(z,t)) \in (-\pi, \pi]$. Condition \eqref{eq:cond} is equivalent to $N(t)=0$. Consider the behavior of $f(t)$ as $t \to 0$ from above. We have $\lim_{t\to 0} (1-e^{tz}) = 0$ and by the power series expansion it also holds that $1-e^{tz} = -tz + O(t^2) = -tz(1+O(t))$ for sufficiently small $t$. Thus, $\lim_{t\to 0} \operatorname{Arg}(1-e^{tz}) = \lim_{t\to 0} \operatorname{Arg}(-tz)$.
Therefore, $\lim_{t\to 0} f(t) = 0$. The function $f$ is continuous for $t \in (0, 1]$, since $z \neq 0$, $t \neq 0$, and $1-e^{tz}$ is only zero if $tz=2k\pi i$, which would imply $L(z,t)$ is undefined or $L(z,t)=0$, both excluded or handled by $\mathscr{D}$.
If $f(t_0) = \pi$ for some $t_0 \in (0, 1]$, then $\operatorname{Arg}(L(z, t_0))=\pi$ with $N(t_0)=0$, which is not possible for $z \in \mathscr{D}$ as this would mean $L(z,t_0) \in (-\infty, 0]$. If $f(t_0) = -\pi$ for some $t_0 \in (0, 1]$, then $\operatorname{Arg}(L(z, t_0))=-\pi + 2\pi = \pi$ with $N(t_0)=1$, which is also not possible for $z \in \mathscr{D}$ as this would mean $L(z,t_0) \in (-\infty, 0]$. Since $f$ starts at $0$ as $t \to 0$ from above and is continuous on $(0, 1]$, and it can never reach the boundary values $\pi$ or $-\pi$ because $z \in \mathscr{D}$, it must remain strictly within the interval $(-\pi, \pi)$. Hence
\[
f(t) = \operatorname{Arg}(1-e^{tz}) - \operatorname{Arg}(-tz) \in (-\pi, \pi) \quad \text{for all } t \in (0, 1].
\]
This implies that the condition \eqref{eq:cond} holds, i.e., $N(t)=0$, and therefore the relation is valid for all $z \in \mathscr{D} \setminus [0, \infty)$ and $t \in (0, 1]$. This justifies writing
\[
\log\Bigl(\frac{1-e^{tz}}{-tz}\Bigr)
=\log\bigl(1-e^{tz}\bigr)-\log(-z)-\log t,
\]
and thus
\[
\int_0^1\log\Bigl(\frac{1-e^{tz}}{-tz}\Bigr)dt
=\int_0^1\log\bigl(1-e^{tz}\bigr)dt-\log(-z)-\int_0^1\log t\,dt.
\]
Since \(\int_0^1\log t\,dt=-1\), it follows that
\[
\int_0^1\log\Bigl(\frac{1-e^{tz}}{-tz}\Bigr)dt
=\int_0^1\log\bigl(1-e^{tz}\bigr)dt-\log(-z)+1.
\]
Now, for \(\Re z < 0\) one may expand
\[
\log\bigl(1-e^{tz}\bigr)=-\sum_{k\ge1}\frac{e^{zkt}}{k},
\]
so that
\[
\int_0^1\log\bigl(1-e^{tz}\bigr)dt
=-\sum_{k\ge1}\frac{1}{k}\int_0^1e^{zkt}\,dt
=-\frac{1}{z}\sum_{k\ge1}\frac{e^{zk}-1}{k^2}.
\]
Hence,
\[
\int_0^1\log\bigl(1-e^{tz}\bigr)dt
=-\frac{1}{z}\Bigl(\operatorname{Li}_2(e^z)-\frac{\pi^2}{6}\Bigr).
\]
Thus, the identity holds for all $z$ with $\Re z < 0$. Let $\phi(z) := \int_0^1 \log\Bigl(\frac{e^{tz}-1}{tz}\Bigr)dt$ and $R(z) := -\frac{1}{z}\Bigl(\operatorname{Li}_2(e^z)-\frac{\pi^2}{6}\Bigr)-\log(-z)+1$. As shown in the proof of Lemma \ref{lem: properties of eta_f}, the function $\phi$ is analytic on the domain $\mathscr{D} = \{z\in \C: \frac{e^{tz}-1}{tz}\not\in (-\infty, 0] \text{ for all } t\in [0,1]\}$. The function $R$ involves the principal branches of the dilogarithm and the logarithm. Both functions are analytic on the connected open set $\mathscr{D} \setminus [0, \infty)$. Since they agree on the set $\{ z \mid \Re z < 0 \}$, the Identity Theorem for analytic functions, \cite[Theorem 3.2.6]{ablowitz2003complex}, implies that $\phi(z) = R(z)$ for all $z \in \mathscr{D} \setminus [0, \infty)$. By continuity and analytic extension, the equality extends for all $z \in \mathscr{D}$. 
\end{proofof}

\begin{remark}
Note that Equation \eqref{eq:log_relation} does not hold in general. Let $t=1$ and $z = \pi + 2\pi i$. In this case $\operatorname{Arg}(e^z-1) - \operatorname{Arg}(z) \approx 5.176 > \pi$ and thus the product rule for $\log$ is violated. In this example we have $z \notin \mathscr{D}$.
\end{remark}

\begin{proofof}{Theorem {\ref{thm:modphi-colored}}}
Apply Theorem \ref{thm:modphi-mixed} with \(a_{N,j}\equiv1\), \(b_{N,j}=2j\) and \(s_N=N\). Then \(b_{N,j}/s_N=f(j/N)\) with \(f(t)=2t\in C^2[0,1]\). Also \(\delta_{N,j}=a_{N,j}/s_N=1/N\to0\) and
\[
\Sigma=\lim_{N\to\infty}\sum_{j=1}^N\frac{a_{N,j}}{N}
=\lim_{N\to\infty}\frac{1}{N}\sum_{j=1}^N 1 = 1.
\]
Therefore the hypotheses of Theorem~\ref{thm:modphi-mixed} hold, yielding the stated \(\phi\) and \(\psi\).
\end{proofof}

\begin{proofof}{Theorem {\ref{thm: mod_phi_j_squared}}}
We use Theorem~\ref{thm:modphi-mixed} with $s_N=N^2$, $a_{N,j}=j$ and $b_{N,j}=j^2$ for $j=1,\ldots,N$.  Then $f(t)=t^{2}$ and
\[
\phi(z)=\int_{0}^{1} \log\Bigl(\frac{1-e^{zt^2}}{-zt^2}\Bigr)\,dt.
\]
Further, $\delta_{N,j}=j/N^{2}\to0$, as $N\to\infty$, and
\[
\Sigma = \lim_{N\to\infty}\sum_{j=1}^{N}\frac{j}{N^{2}} = \lim_{N\to\infty} \frac{1}{N^2} \frac{N(N+1)}{2} = \frac12.
\]
Lemma~\ref{lem: asymptotics Taylor} therefore yields $S_N^a(z)=\frac{z}{2}\Sigma+o(1)=\frac{z}{4}+o(1)$, and hence we arrive at
\[
\psi(z)=\exp\Bigl(\frac12 \log\Bigl(\frac{1-e^{z}}{-z}\Bigr)-\frac{z}{4}\Bigr)=\sqrt{\frac{e^{z}-1}{z}}\,e^{-z/4}.
\]
This shows that $(X_N/N^2)_{N\in\N}$ converges mod-$\phi$ with parameter $N$, normalization function $\phi$ and limiting function $\psi$.
\end{proofof}

\begin{proofof}{Theorem {\ref{thm:modphi-multiset-inversions}}}
We apply the general framework of Theorem~\ref{thm:modphi-mixed} with scaling $s_N=n_N$ and numerator parameters $b_{N,j}=j$ for $j=1,\ldots,n_N$. This choice identifies the profile function as $f(t)=t$ for $t \in [0,1]$, since $b_{N,j}/s_N = j/n_N$. 

Next, we consider the parameters $a_{N,j}$ appearing in the denominator of the $q$-multinomial formula. The assumption $M_N = o(n_N)$ ensures that the individual terms $\delta_{N,j} := a_{N,j}/n_N$ vanish uniformly, that is,
\[
\max_{1 \le j \le n_N} \delta_{N,j} = \frac{M_N}{n_N} \longrightarrow 0 \quad \text{as } N \to \infty.
\]
Furthermore, the aggregated shift constant $\Sigma$ is calculated by summing the internal arithmetic progressions associated with each letter type:
\[
\Sigma = \lim_{N\to\infty} \sum_{j=1}^{n_N} \frac{a_{N,j}}{n_N} = \lim_{N\to\infty} \frac{1}{n_N} \sum_{i=1}^{r(N)} \sum_{k=1}^{m_i^{(N)}} k = \lim_{N\to\infty} \frac{1}{n_N} \sum_{i=1}^{r(N)} \frac{m_i^{(N)}(m_i^{(N)}+1)}{2}.
\]
By the general form of the limiting function $\psi$ provided in Theorem \ref{thm:modphi-mixed}, the boundary values $f(0)=0$ and $f(1)=1$ combined with the shift $\Sigma$ yield
\[
\psi(z) = \exp\left( \frac{1}{2} \log\left( \frac{e^z - 1}{z} \right) - \frac{z}{2} \Sigma \right) = \sqrt{\frac{e^z - 1}{z}} e^{-\frac{z}{2}\Sigma}.
\]
The normalization function $\phi$ and the domain $\mathscr{D}$ follow directly from the linear profile $f(t)=t$, which matches the classical inversion case. This completes the argument.
\end{proofof}

\acknowledgements We would like to thank Zakhar Kabluchko (M\"unster) for enlightening discussions and ideas about inversions of permutations. 

CT was supported by the Deutsche Forschungsgemeinschaft (DFG, German Research Foundation) through SPP 2458 \emph{Combinatorial Synergies}.

PT was supported by the DFG project \emph{Limit theorems for the volume of random projections of $\ell_p$-balls} (project number 516672205).

\begingroup
\newcommand{\n}[1]{\textsc{#1}}
\providecommand{\and}{and~}
\renewcommand{\it}{\itshape}

\endgroup

\end{document}